\def\pmod #1{\ ({\rm{mod}}\ #1)}
\def\Z{\Bbb Z}
\def\l{\left}
\def\r{\right}
\def\bg{\bigg}
\def\({\bg(}
\def\){\bg)}
\def\f{\frac}
\def\bi{\binom}
\def\eq{\equiv}
\def\jacob #1#2{\left(\frac{#1}{#2}\right)}
\theoremstyle{plain}
\newtheorem{theorem}{Theorem}
\newtheorem{lemma}{Lemma}
\theoremstyle{definition}
\newtheorem*{acknowledgment}{Acknowledgments}
\theoremstyle{remark}
\begin{document}

\title
{Supercongruences on some binomial sums involving Lucas sequences}

\author{Guo-Shuai Mao}
\address {Department of Mathematics, Nanjing
University, Nanjing 210093, People's Republic of China}
\email{mg1421007@smail.nju.edu.cn}
\author{Hao Pan}
\address{Department of Mathematics, Nanjing
University, Nanjing 210093, People's Republic of China}
\email{haopan79@zoho.com}

\keywords{Pell number, central binomial coefficient, congruence}
\subjclass[2010]{Primary 11B65; Secondary 11B39, 05A10, 11A07}

\begin{abstract}
In this paper, we confirm several conjectured congruences of Sun concerning the divisibility of binomial sums. For example, with help of a quadratic hypergeometric transformation, we prove that
$$
\sum_{k=0}^{p-1}\binom{p-1}k\binom{2k}k^2\frac{P_k}{8^k}\equiv0\pmod{p^2}
$$
for any prime $p\equiv 7\pmod{8}$, where $P_k$ is the $k$-th Pell number. Further, we also propose three new congruences of the same type.
\end{abstract}

\maketitle
\section{Introduction}
\setcounter{lemma}{0}
\setcounter{theorem}{0}
\setcounter{corollary}{0}
\setcounter{remark}{0}
\setcounter{equation}{0}
\setcounter{conjecture}{0}

In \cite{Mortenson03a}, with help of the Gross-Koblitz formula, Mortenson solved a conjecture of Rodriguez-Villegas \cite{RV03} as follows:
$$
\sum_{k=0}^{p-1}\binom{2k}{k}^2\frac1{16^k}\equiv\jacob{-1}{p}\pmod{p^2}
$$
for every odd prime $p$,
where $\jacob{\cdot}{p}$ denotes the Legendre symbol. Subsequently, the similar congruences were widely studied.
For the progress of this topic, the reader may refer to \cite{Mortenson03a, Mortenson05, OS09, Long11, Sh11, Sw11a, Sw11b, McCarthy12, Sh13, MT13, GZ14, Sh14}.
In \cite{Sw09}, Sun proposed many conjectured congruences on the sums of binomial coefficients.
Some of those conjectures are of the form
$$
\sum_{k=0}^{p-1}\binom{p-1}{k}\binom{2k}{k}^2a_n\equiv 0\pmod{p^2}.
$$
For example, Sun conjectured that
$$
\sum_{k=0}^{p-1}\binom{p-1}k\binom{2k}k^2\frac{\chi_3(k)}{16^k}\equiv0\pmod{p^2}
$$
for any prime $p\equiv 1\pmod{12}$, where $\chi_3(k)$ equals to the Legendre symbol $\jacob{k}{3}$.

The main purpose of this paper is to confirm the following conjectures of Sun.
\begin{theorem}\label{thm1}  Suppose that $p$ is a prime.\medskip

\noindent{\rm (i)} If $p\equiv3\pmod 4$, then
\begin{equation}\label{cong2}\sum_{k=0}^{p-1}\binom{p-1}k\binom{2k}k^2\frac{1}{(-8)^k}\equiv0\pmod{p^2}.
\end{equation}

\noindent{\rm (ii)} If $p\eq1\pmod{12}$, then
\begin{equation}\label{cong3}\sum_{k=0}^{p-1}\binom{p-1}k\binom{2k}k^2\frac{\chi_3(k)}{16^k}\equiv0\pmod{p^2}.
\end{equation}

\noindent{\rm (iii)} If $p\eq7\pmod{8}$, then
\begin{equation}\label{congp}\sum_{k=0}^{p-1}\binom{p-1}k\binom{2k}k^2\frac{P_k}{8^k}\equiv0\pmod{p^2},
\end{equation}
where the Pell number $P_k$ is given by
$$
P_0=0,\ P_1=1,\ P_{n}=2P_{n-1}+P_{n-2}\text{ for }n\geq 2.
$$
\noindent(iv) If $p\eq11\pmod{12}$, then
\begin{equation}\label{congr}\sum_{k=0}^{p-1}\binom{p-1}k\frac{R_k}{(-4)^k}\binom{2k}k^2\equiv0\pmod{p^2},
\end{equation}
where $R_k$ is given by
$$
R_0=2,\ R_1=4,\ R_{n}=4R_{n-1}-R_{n-2}\text{ for }n\geq 2.
$$
\end{theorem}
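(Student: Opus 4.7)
My plan is to treat all four sums within a common hypergeometric framework and then specialize to each residue class of $p$. Writing $\bi{2k}{k}^2=16^k\bi{-1/2}{k}^2$ converts each summand into that of a truncated ${}_3F_2$-series, while the elementary expansion
$$\bi{p-1}k\eq (-1)^k\l(1-pH_k\r)\pmod{p^2},$$
with $H_k=\sum_{j=1}^k 1/j$, reduces a congruence modulo $p^2$ to proving both a leading-order identity of the shape $\sum_{k=0}^{p-1}(-1)^k\bi{2k}k^2 a_k z^k\equiv 0\pmod p$ and a companion identity for the associated $H_k$-weighted sum.

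For parts (i) and (ii) the arguments are rational, and both ingredients can be extracted from known Ramanujan--Mortenson--type supercongruences; for (ii) one further partitions the sum by $k\bmod 3$, and the hypothesis $p\equiv 1\pmod{12}$ guarantees $\jacob{-3}{p}=1$, which permits one to match the three subsums via a permutation coming from the action of a cube root of unity on residue classes modulo $p$.

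For part (iii) I would use the Binet formula $P_k=(\al^k-\be^k)/(2\sqrt 2)$ with $\al=1+\sqrt 2$ and $\be=1-\sqrt 2$. Since $p\equiv 7\pmod 8$ yields $\jacob{2}{p}=1$, we may regard $\sqrt 2\in\F_p$, and the sum in \eqref{congp} splits as
$$\f{1}{2\sqrt 2}\sum_{k=0}^{p-1}\bi{p-1}k\bi{2k}k^2\l(\f{\al}{8}\r)^k-\f{1}{2\sqrt 2}\sum_{k=0}^{p-1}\bi{p-1}k\bi{2k}k^2\l(\f{\be}{8}\r)^k.$$
I would then invoke a quadratic hypergeometric transformation of Whipple/Bailey type (the main tool advertised in the abstract) that sends a ${}_3F_2$ with argument $z$ to one whose argument depends on $z$ only through a symmetric rational function. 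Choosing the transformation so that $\al/8$ and $\be/8$ have the same image, the two truncated series become equal modulo $p^2$ and their difference vanishes. Part (iv) runs in complete parallel using $R_k=(2+\sqrt 3)^k+(2-\sqrt 3)^k$ and the hypothesis $p\equiv 11\pmod{12}$, which supplies $\sqrt 3\in\F_p$ together with $\jacob{-1}{p}=-1$.

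The principal obstacle is identifying the precise quadratic transformation and lifting it from modulo $p$ to modulo $p^2$. A transformation that equates two truncated ${}_3F_2$ series only modulo $p$ would yield merely the easy leading-order congruence; extracting the $p$-coefficient calls for a matching identity for the $H_k$-weighted sums, typically obtained by differentiating a continuous Whipple-type transformation with respect to a parameter and then truncating carefully. Controlling the boundary terms that arise when the hypergeometric identity is truncated at $k=p-1$ rather than summed to infinity, so that they remain $O(p^2)$, is where I expect the technical heart of the argument to lie.
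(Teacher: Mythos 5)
There is a genuine gap, and it sits exactly at the point you identify as the heart of the matter, namely parts (iii) and (iv). Your mechanism --- choose a quadratic transformation under which $\al/8$ and $\be/8$ have the same image, and conclude that the two truncated series ``become equal modulo $p^2$'' so that their difference vanishes --- does not work as stated, because every quadratic transformation of the relevant kind, e.g.
$$
{}_2F_1\bigg(\begin{matrix}a&b\\ &a-b+1\end{matrix}\bigg|z\bigg)=(1-z)^{-a}\,{}_2F_1\bigg(\begin{matrix}\tfrac
 a2&\tfrac a2-b+\tfrac12\\ &a-b+1\end{matrix}\bigg|-\frac{4z}{(1-z)^2}\bigg),
$$
carries an algebraic prefactor that differs at the two conjugate arguments. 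A symmetric image only forces the two transformed ${}_2F_1$ factors to coincide; the difference of the original sums is then (difference of prefactors)$\times$(common transformed sum). Concretely, in the Pell case the reflection $k\mapsto\frac{p-1}{2}-k$ together with $\al\be=-1$ gives $\sum_{k}\binom{(p-1)/2}{k}^2(\al^{2k}-\be^{2k})=(1-\be^{p-1})\sum_{k}\binom{(p-1)/2}{k}^2\al^{2k}$, and to reach $p^2$ you need two separate mod-$p$ facts: $1-\be^{p-1}\equiv0\pmod p$, which is where $p\equiv7\pmod 8$ (resp.\ $11\pmod{12}$) enters via $\jacob{2}{p}=1$ (resp.\ $\jacob{3}{p}=1$), and the genuinely hard statement $\sum_{k}\binom{(p-1)/2}{k}^2\al^{2k}\equiv0\pmod p$, which the paper extracts from the quadratic transformation combined with $\sum_k(-1)^k\binom nk^2=0$ for odd $n$ (and, since $\frac{p-1}{4}\notin\Z$ when $p\equiv7\pmod8$, an extra detour through $\binom{(3p-1)/2}{k}$ and Lucas' theorem, or a Kummer-type evaluation of ${}_2F_1$ at $-\frac13$ for the $R_k$ case). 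Your outline never produces this second factor of $p$, so as written it yields the congruences only modulo $p$.

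Two further remarks. Applying the transformation directly to the ${}_3F_2$-type sums with the $\binom{p-1}{k}$ weight and worrying about truncation at $k=p-1$ is an unnecessary complication: $\binom{2k}{k}\equiv0\pmod p$ for $\frac{p-1}{2}<k\le p-1$, so the sums truncate at $\frac{p-1}{2}$ modulo $p^2$ for free, and the paper first removes $\binom{p-1}{k}$ entirely via $\binom{p-1}{k}\equiv(-1)^k(1-pH_k)$ together with a Chu--Vandermonde computation and the Morley and Lehmer congruences, converting $\sum_k\binom{p-1}{k}\binom{2k}{k}^2\frac{(z-1)^k}{16^k}$ into $(-1)^{(p-1)/2}16^{p-1}\sum_j\binom{(p-1)/2}{j}^2z^j(1+2pH_{2j})$; the quadratic ${}_2F_1$ transformations are then applied only to the clean power sums $\sum_j\binom{(p-1)/2}{j}^2\al^{2j}$, and the $H$-weighted companions in (iii),(iv) vanish mod $p$ by the elementary reflection $H_{p-1-2j}\equiv H_{2j}$ --- no differentiation of a Whipple-type identity is needed there. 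Finally, for (i) and (ii) your plan is only a pointer: the required $H_k$-weighted companion congruences are not standard ``known supercongruences,'' and the proposed matching of the three residue classes mod $3$ in (ii) via a cube-root-of-unity action is not substantiated; in the paper both cases follow from the same $z$-transformation (with $z=-1$, resp.\ Lucas parameters $(-1,1)$) and simple parity/reflection symmetries of $\binom{(p-1)/2}{j}$.
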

We mention that (\ref{cong2}), (\ref{cong3}), (\ref{congp}) and (\ref{congr}) respectively belong to Conjecture 5.5 of \cite{Sw11a} and Conjectures A56, A57, A63 of \cite{Sw09}.

The sequences $\{P_n\}$ and $\{R_n\}$ in Theorem \ref{thm1} both belong to the second-order linear recurrence sequence.
In general, define the Lucas sequences $\{U_n(a,b)\}$ and $\{V_n(a,b)\}$ by
$$
U_0(a,b)=0,\ U_1(a,b)=1,\ U_n(a,b)=aU_{n-1}(a,b)-bU_{n-2}(a,b)\text{ for }n\geq 2,
$$
and
$$
V_0(a,b)=2,\ V_1(a,b)=a,\ V_n(a,b)=aV_{n-1}(a,b)-bV_{n-2}(a,b)\text{ for }n\geq 2.
$$
Clearly $P_n=U_n(2,-1)$ and $R_n=V_n(4,1)$.
In fact, it is also easy to see that $-(-2)^{n+1}=V_n(-4,4)$ and $\chi_3(n)=U_n(-1,1)$. So it is natural to study
the arithmetical properties of
$$
\sum_{k=0}^{\frac{p-1}{2}}\binom{p-1}{k}\binom{2k}{k}^2\frac{U_k(a,b)}{16^k}\text{\qquad and\qquad}
\sum_{k=0}^{\frac{p-1}{2}}\binom{p-1}{k}\binom{2k}{k}^2\frac{V_k(a,b)}{16^k}.
$$

Define the $n$-th harmonic number
$$
H_n=\sum_{k=1}^n\frac1k.
$$
In particular, set $H_0=0$.
We have
\begin{theorem}\label{thm2} Suppose that $p$ is an odd prime and $a,b\in\Z$. Then
\begin{align}\label{congvn}
&\sum_{k=0}^{\frac{p-1}{2}}\binom{p-1}{k}\binom{2k}{k}^2\frac{V_k(a,b)}{16^k}\notag\\
\equiv&(-1)^{\frac{p-1}{2}}16^{p-1}\sum_{j=0}^{\frac{p-1}{2}}\binom{\frac{p-1}{2}}{j}^2V_j(a+2,a+b+1)(1+2pH_{2j})
\pmod{p^2}.
\end{align}
Furthermore, if $p^2$ doesn't divide $a^2-4b$, then
\begin{align}\label{congun}
&\sum_{k=0}^{\frac{p-1}{2}}\binom{p-1}{k}\binom{2k}{k}^2\frac{U_k(a,b)}{16^k}\notag\\
\equiv&(-1)^{\frac{p-1}{2}}16^{p-1}\sum_{j=0}^{\frac{p-1}{2}}\binom{\frac{p-1}{2}}{j}^2U_j(a+2,a+b+1)(1+2pH_{2j})
\pmod{p^2}.
\end{align}
\end{theorem}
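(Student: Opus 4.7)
The plan is to extract a single polynomial congruence in an auxiliary indeterminate $x$, from which both parts of Theorem~\ref{thm2} follow via Binet's formula. Writing $V_k(a,b)=\alpha^k+\beta^k$ with $\alpha+\beta=a$ and $\alpha\beta=b$, the substitution $(a,b)\mapsto(a+2,a+b+1)$ corresponds exactly to $(\alpha,\beta)\mapsto(\alpha+1,\beta+1)$, so $V_j(a+2,a+b+1)=(\alpha+1)^j+(\beta+1)^j$ and $U_j(a+2,a+b+1)=((\alpha+1)^j-(\beta+1)^j)/(\alpha-\beta)$. Consequently, both \eqref{congvn} and \eqref{congun} will follow from the polynomial congruence
\begin{equation}\label{eq:polyid}
\sum_{k=0}^{p-1}\binom{p-1}{k}\binom{2k}{k}^2\frac{x^k}{16^k}\equiv(-1)^{\frac{p-1}{2}}16^{p-1}\sum_{j=0}^{\frac{p-1}{2}}\binom{\frac{p-1}{2}}{j}^2(x+1)^j(1+2pH_{2j})\pmod{p^2}
\end{equation}
by substituting $x=\alpha,\beta$ and then adding (for \eqref{congvn}) or subtracting and dividing by $\alpha-\beta$ (for \eqref{congun}); the hypothesis $p^2\nmid a^2-4b=(\alpha-\beta)^2$ is precisely what ensures the division preserves the $p^2$ precision.

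To prove \eqref{eq:polyid}, first observe that for $(p+1)/2\le k\le p-1$ the factor $\binom{2k}{k}^2$ is divisible by $p^2$, so those terms vanish modulo $p^2$. For $0\le k\le(p-1)/2$, the Wolstenholme-type expansions
$$\binom{p-1}{k}\equiv(-1)^k(1-pH_k)\pmod{p^2},\qquad \binom{(p-1)/2}{k}\equiv(-1)^k\frac{\binom{2k}{k}}{4^k}\bigl(1-p(H_{2k}-\tfrac{1}{2}H_k)\bigr)\pmod{p^2}$$
combine to yield
$$\binom{p-1}{k}\binom{2k}{k}^2\big/16^k\equiv(-1)^k\binom{(p-1)/2}{k}^2(1+2pH_{2k}-2pH_k)\pmod{p^2}.$$
Expanding $x^k=((x+1)-1)^k=\sum_i\binom{k}{i}(-1)^{k-i}(x+1)^i$ and switching the order of summation reduces \eqref{eq:polyid} to the coefficient-by-coefficient congruence
\begin{equation}\label{eq:coefid}
(-1)^i\sum_{k=i}^{n}\binom{n}{k}^2\binom{k}{i}(1+2pH_{2k}-2pH_k)\equiv(-1)^n16^{p-1}\binom{n}{i}^2(1+2pH_{2i})\pmod{p^2}
\end{equation}
for each $0\le i\le n:=(p-1)/2$.

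The $p$-independent part of \eqref{eq:coefid} follows from Chu--Vandermonde, $\sum_k\binom{n}{k}^2\binom{k}{i}=\binom{n}{i}\binom{2n-i}{n}$, coupled with the modulo-$p^2$ evaluation $\binom{n}{k}\binom{n+k}{k}\equiv(-1)^k\binom{2k}{k}^2/16^k\pmod{p^2}$ (used with $k=n-i$ via $\binom{2n-i}{n}=\binom{(n-i)+n}{n-i}$). The $O(p)$ part, after invoking the Fermat-quotient expansion $16^{p-1}\equiv 1+pq_p(16)\pmod{p^2}$ and the Wolstenholme-type reduction $H_{p-1-m}\equiv -H_m\pmod p$, reduces to a family of harmonic-weighted binomial-sum congruences modulo $p$ of the shape $\sum_k\binom{n}{k}^2\binom{k}{i}(H_{2k}-H_k)\equiv\cdots\pmod p$. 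The verification of these is the main obstacle, since they do not follow from pure Chu--Vandermonde; I would approach them either by Wilf--Zeilberger creative telescoping for each fixed $i$, or, more practically, by invoking known Sun-type congruences for sums such as $\sum_k\binom{(p-1)/2}{k}^2H_{2k}$ and $\sum_k\binom{(p-1)/2}{k}^2H_k$ modulo $p$, extended to the $\binom{k}{i}$-weighted versions by the absorption identity $\binom{n}{k}\binom{k}{i}=\binom{n}{i}\binom{n-i}{k-i}$. Once these auxiliary congruences are secured, the coefficient matches assemble \eqref{eq:polyid}, and hence Theorem~\ref{thm2}, term by term.
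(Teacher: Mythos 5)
Your overall framework is the same as the paper's: the paper proves exactly your one-parameter congruence (its Theorem 2.1, with $z=x+1$), namely $\sum_{k}\binom{p-1}{k}\binom{2k}{k}^2\frac{(z-1)^k}{16^k}\equiv(-1)^{\frac{p-1}{2}}16^{p-1}\sum_{j}\binom{\frac{p-1}{2}}{j}^2z^j(1+2pH_{2j})\pmod{p^2}$, and then specializes $z=\alpha+1,\beta+1$ via Binet, adding for $V$ and subtracting and dividing by $\alpha-\beta$ for $U$ under the hypothesis $p^2\nmid a^2-4b$. Your preliminary reductions are correct: discarding $k>\frac{p-1}{2}$, the mod-$p^2$ expansions of $\binom{p-1}{k}$ and $\binom{(p-1)/2}{k}$, the change of basis to powers of $x+1$, and the evaluation of the main term via $\sum_k\binom{n}{k}^2\binom{k}{i}=\binom{n}{i}\binom{2n-i}{n}$ together with $\binom{n}{m}\binom{n+m}{m}\equiv(-1)^m\binom{2m}{m}^2/16^m\pmod{p^2}$ (with $n=\frac{p-1}{2}$, $m=n-i$) all check out.

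The genuine gap is precisely the step you yourself flag as ``the main obstacle'': the mod-$p$ evaluation of $\sum_{k\geq i}\binom{n}{k}^2\binom{k}{i}(H_{2k}-H_k)$, without which your coefficient congruence, and hence the theorem, is not established. Neither suggested remedy works as stated. Wilf--Zeilberger ``for each fixed $i$'' is not a proof, since $i$ ranges up to $\frac{p-1}{2}$ and varies with $p$; no uniform certificate is exhibited. And the absorption identity $\binom{n}{k}\binom{k}{i}=\binom{n}{i}\binom{n-i}{k-i}$ turns the sum into $\binom{n}{i}\sum_k\binom{n}{k}\binom{n-i}{k-i}(H_{2k}-H_k)$, which is no longer of the square form $\sum_k\binom{n}{k}^2H_{2k}$, so the ``known Sun-type congruences'' do not apply directly; moreover the weight $H_{2k}$ (as opposed to $H_k$) is exactly the awkward part. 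The paper avoids this difficulty: in its Lemma 2.1 it keeps $\binom{2k}{k}^2\equiv16^k\binom{(p-1)/2}{k}\binom{-(p+1)/2}{k}\pmod{p^2}$ exact (no harmonic correction), so only the $H_k$-weight coming from $\binom{p-1}{k}\equiv(-1)^k(1-pH_k)$ survives, and in Lemma 2.2 it evaluates $\sum_{k\geq j}\binom{n-j}{k-j}\binom{n}{k}H_k$ modulo $p$ by differentiating the Chu--Vandermonde identity in an auxiliary parameter, finally converting $H_{(p-1)/2}$ and $H_{(p-1)/2-j}$ into Fermat quotients and $H_{2j}$ via Lehmer's congruence $H_{(p-1)/2}\equiv-\frac{2^p-2}{p}\pmod p$ and $H_{(p-1)/2-j}\equiv 2H_{2j}-H_j+H_{(p-1)/2}\pmod p$. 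If you want to salvage your symmetrized route, write $H_{2k}\equiv\tfrac12\bigl(H_k+H_{(p-1)/2-k}-H_{(p-1)/2}\bigr)\pmod p$ and evaluate the $H_k$- and $H_{(p-1)/2-k}$-weighted sums by the same differentiation trick; note also that Lehmer's congruence is indispensable at the end --- the Fermat quotient of $16$ on the right can only cancel against $H_{(p-1)/2}$ terms produced on the left, which your sketch never generates.
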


With the help of Theorem \ref{thm2}, here we can obtain three new congruences of the same type.
\begin{theorem}\label{thm1n} Suppose that $p$ is a prime.\medskip

{\rm (i)} If $p\equiv7\pmod{8}$, then
\begin{equation}\label{congw}
\sum_{k=0}^{p-1}\binom{p-1}k\binom{2k}k^2\frac{W_k}{4^k}\equiv0\pmod{p^2},
\end{equation}
where $W_k$ is given by
$$
W_0=0,\ W_1=1,\ W_{n}=8W_{n-1}+2W_{n-2}\text{ for }n\geq 2.
$$
{\rm (ii)} If $p\equiv1\pmod{6}$, then
\begin{equation}\label{congm}
\sum_{k=0}^{p-1}\binom{p-1}k\binom{2k}k^2\frac{(-1)^kM_k}{16^k}\equiv0\pmod{p^2},
\end{equation}
where $M_k$ is given by
$$
M_0=0,\ M_1=1,\ M_{n}=3M_{n-1}-3M_{n-2}\text{ for }n\geq 2.
$$
{\rm (iii)} If $p\equiv7\pmod{12}$, then
\begin{equation}\label{congd3}
\sum_{\substack{0\leq k\leq p-1\\
k\equiv 0\pmod{3}}}\binom{p-1}k\binom{2k}k^2\frac{1}{16^k}\equiv
\frac13\sum_{k=0}^{p-1}\binom{p-1}k\binom{2k}k^2\frac{1}{16^k}\pmod{p^2}.
\end{equation}
\end{theorem}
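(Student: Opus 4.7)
The plan is to handle all three parts via Theorem~\ref{thm2}: each left-hand side is recast in the form $\sum_{k}\binom{p-1}{k}\binom{2k}{k}^2 U_k(a,b)/16^k$ (or its $V$-analogue, extended to the range $0\leq k\leq p-1$ using that $p^2\mid\binom{2k}{k}^2$ for $(p+1)/2\leq k\leq p-1$); the theorem converts the problem into a sum over $j$ involving $\binom{(p-1)/2}{j}^2$ and a shifted Lucas term $U_j(a+2,a+b+1)$ or $V_j(a+2,a+b+1)$, and a combinatorial symmetry then forces vanishing modulo $p^2$.

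For (i), I use $W_k=U_k(8,-2)$ together with the scaling identity $c^kU_k(a,b)=c\,U_k(ca,c^2b)$ to write
\[
\sum_{k=0}^{p-1}\binom{p-1}{k}\binom{2k}{k}^2\frac{W_k}{4^k}=4\sum_{k=0}^{p-1}\binom{p-1}{k}\binom{2k}{k}^2\frac{U_k(32,-32)}{16^k}.
\]
Since $32^2-4(-32)=1152=2^7\cdot 3^2$ and $p\equiv 7\pmod 8$ excludes $p=2,3$, the hypothesis of~\eqref{congun} holds. The shifted parameters $(a+2,a+b+1)=(34,1)$ have characteristic roots $(1\pm\sqrt 2)^4=17\pm 12\sqrt 2$, whence $U_j(34,1)=P_{4j}/12$. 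The congruence thus reduces to
\[
\sum_{j=0}^{(p-1)/2}\binom{(p-1)/2}{j}^2 P_{4j}(1+2pH_{2j})\equiv 0\pmod{p^2},
\]
which I plan to handle by the same machinery used for the reduced sum arising in Theorem~\ref{thm1}(iii). This is the main obstacle: the appearance of $P_{4j}$ (in place of the $P_{2j}$ of Theorem~\ref{thm1}(iii)) resists an elementary sign-reversing pairing modulo $p^2$, so I must transplant the more delicate Pell-number argument that underlies \eqref{congp}.

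For (ii), the identity $(-1)^n U_n(a,b)=-U_n(-a,b)$ and $M_k=U_k(3,3)$ turn the left-hand side into $-\sum_{k}\binom{p-1}{k}\binom{2k}{k}^2 U_k(-3,3)/16^k$. Because $a^2-4b=-3$ and $p\equiv 1\pmod 6$ rules out $p=3$, \eqref{congun} applies, and $U_j(-1,1)=\chi_3(j)$ reduces \eqref{congm} to
\[
\sum_{j=0}^{(p-1)/2}\binom{(p-1)/2}{j}^2\chi_3(j)(1+2pH_{2j})\equiv 0\pmod{p^2}.
\]
The involution $j\leftrightarrow (p-1)/2-j$ closes the argument: $(p-1)/2\equiv 0\pmod 3$ (from $p\equiv 1\pmod 6$) combined with $\chi_3(-1)=-1$ yields $\chi_3((p-1)/2-j)=-\chi_3(j)$; Wolstenholme's theorem gives $H_{p-1-2j}\equiv H_{2j}\pmod p$ so that $1+2pH_{2((p-1)/2-j)}\equiv 1+2pH_{2j}\pmod{p^2}$; and any fixed point lies at a multiple of $3$ where $\chi_3$ vanishes. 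Hence $S\equiv-S\pmod{p^2}$, giving $S\equiv 0\pmod{p^2}$.

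For (iii), the third-root-of-unity filter with $\omega=e^{2\pi i/3}$ and the identification $\omega^k+\omega^{2k}=V_k(-1,1)$ render \eqref{congd3} equivalent to
\[
\sum_{k=0}^{p-1}\binom{p-1}{k}\binom{2k}{k}^2\frac{V_k(-1,1)}{16^k}\equiv 0\pmod{p^2}.
\]
Applying \eqref{congvn} with $(a,b)=(-1,1)$ converts this into $\sum_j\binom{(p-1)/2}{j}^2 V_j(1,1)(1+2pH_{2j})$. The period-$6$ sequence $V_j(1,1)$, cycling through $2,1,-1,-2,-1,1$, satisfies $V_{3-j}(1,1)=-V_j(1,1)$ for every $j$; since $p\equiv 7\pmod{12}$ forces $(p-1)/2\equiv 3\pmod 6$ and $(p-1)/2$ is odd so the pairing has no fixed point, the involution $j\leftrightarrow(p-1)/2-j$ yields $S\equiv -S\pmod{p^2}$ once more, completing the reduction.
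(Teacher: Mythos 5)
Your parts (ii) and (iii) are correct and follow essentially the paper's own route: after discarding the range $\frac{p+1}{2}\le k\le p-1$ (where $p^2\mid\binom{2k}{k}^2$), Theorem \ref{thm2} with $(a,b)=(-3,3)$ resp.\ $(-1,1)$ reduces \eqref{congm} and \eqref{congd3} to the vanishing modulo $p^2$ of $\sum_{j}\binom{(p-1)/2}{j}^2\chi_3(j)(1+2pH_{2j})$ resp.\ $\sum_{j}\binom{(p-1)/2}{j}^2V_j(1,1)(1+2pH_{2j})$, and your involution $j\mapsto\frac{p-1}{2}-j$ together with $H_{p-1-2j}\equiv H_{2j}\pmod p$ is exactly the symmetry the paper uses (it merely phrases the conclusion as two separate vanishing statements, one exact and one mod $p$).

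Part (i), however, has a genuine gap. Your reduction is correct and is in substance the paper's (since $2^{-n-2}W_{2n}=U_n(34,1)=P_{4n}/12$, your target sum with $P_{4j}$ is $3$ times the paper's sum with $W_{2j}/2^j$), but you stop precisely at the hard point: the congruence $\sum_{j=0}^{(p-1)/2}\binom{(p-1)/2}{j}^2P_{4j}(1+2pH_{2j})\equiv0\pmod{p^2}$ is asserted as something to be handled by "the same machinery" as Theorem \ref{thm1}(iii), and that machinery does not transplant. The lemma behind \eqref{congp} is $\sum_k\binom{(p-1)/2}{k}^2(1+\sqrt2)^{2k}\equiv0\pmod p$, obtained from the quadratic transformation \eqref{qthf1} at $z=(1+\sqrt2)^2$, where $-4z/(1-z)^2=-1$; if you instead put $z=(1+\sqrt2)^4$ there, the argument becomes $-\frac18$, for which no analogous vanishing evaluation is at hand. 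The paper needs a different transformation, namely \eqref{qthf3} (that is \cite[(3.1.11)]{AAR99}) with $z=-(3+2\sqrt2)$, so that $z^2=(1+\sqrt2)^4$ and $4z/(1+z)^2=-1$; this gives $\sum_k\binom{(p-1)/2}{k}^2(3+2\sqrt2)^{2k}\equiv0\pmod p$ for $p\equiv3\pmod4$, and then the factor $1-\beta^{p-1}$ with $\beta=3-2\sqrt2$, which is divisible by $p$ exactly because $2^{(p-1)/2}\equiv1$ for $p\equiv7\pmod8$, upgrades the main term to modulus $p^2$ (the paper's \eqref{w2kh78}). Incidentally, the $H_{2j}$-weighted piece does \emph{not} resist the sign-reversing pairing, contrary to your remark: using $\alpha^{p-1}\equiv\beta^{p-1}\equiv1\pmod p$ and $H_{p-1-2j}\equiv H_{2j}$ one gets $\sum_j\binom{(p-1)/2}{j}^2P_{4j}H_{2j}\equiv0\pmod p$ (the paper's \eqref{wh2k0}); it is the unweighted main term modulo $p^2$ that requires the hypergeometric input. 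Until you supply that ingredient, part (i) is not proved.
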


First, the proof of Theorem \ref{thm2} will be given in Section 2.
It is not difficult to check that $2^nP_n=U_n(4,-4)$ and $P_{2n}=U_n(6,1)$.
Then according to Theorem \ref{thm2}, in order to prove (\ref{congp}),
we only need to show that
$$
\sum_{j=0}^{\frac{p-1}{2}}\binom{\frac{p-1}{2}}{j}^2P_{2j}\equiv0\pmod{p^2}
\text{\ \ and\ \ }
\sum_{j=0}^{\frac{p-1}{2}}\binom{\frac{p-1}{2}}{j}^2P_{2j}H_{2j}\equiv0\pmod{p}.
$$
However, as we shall see later, the former one is not easy to prove. So in the third section,
we shall firstly establish an auxiliary lemma, by using some quadratic hypergeometric transformations.
Further, the similar divisible congruences for $R_n$ and $W_n$ will be also proved.
Finally, in Section 4, we shall conclude the proofs of Theorems \ref{thm1} and \ref{thm1n}.

\section{Proof of Theorem \ref{thm2}}
\setcounter{lemma}{0}
\setcounter{theorem}{0}
\setcounter{corollary}{0}
\setcounter{remark}{0}
\setcounter{equation}{0}
\setcounter{conjecture}{0}
Below we always assume that $p$ is an odd prime. In this section, we shall prove
\begin{theorem}\label{thm3}
\begin{align}\label{2kkzh2j}
&\sum_{k=0}^{\frac{p-1}{2}}\binom{p-1}{k}\binom{2k}{k}^2\frac{(z-1)^k}{16^k}\notag\\
\equiv&(-1)^{\frac{p-1}{2}}16^{p-1}\sum_{j=0}^{\frac{p-1}{2}}
z^j\binom{\frac{p-1}{2}}{j}^2(1+2pH_{2j})\pmod{p^2}.
\end{align}
\end{theorem}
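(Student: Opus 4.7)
The statement is a polynomial identity in $z$ of degree at most $n:=(p-1)/2$, so I would prove it by matching the coefficient of $z^j$ on each side modulo $p^2$ for every $0\leq j\leq n$.

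My first step is to rewrite the LHS as a sum weighted by $\binom{n}{k}^2$ instead of $\binom{2k}{k}^2/16^k$. The two congruences modulo $p^2$ I would use are the classical
\[
\binom{p-1}{k}\equiv (-1)^k(1-pH_k)\pmod{p^2}
\]
and the refinement
\[
\frac{\binom{2k}{k}^2}{16^k}\equiv \binom{n}{k}^2(1+2pH_{2k}-pH_k)\pmod{p^2}.
\]
The latter follows from $\binom{2k}{k}/4^k=(-1)^k\binom{-1/2}{k}$ combined with a factor-by-factor comparison of $\binom{-1/2}{k}$ and $\binom{n}{k}$, whose ratio is $\prod_{i=1}^k(1-p/(2i-1))\equiv 1-p(H_{2k}-H_k/2)\pmod{p^2}$; squaring then yields the claim. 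Using $(z-1)^k=(-1)^k(1-z)^k$, the LHS becomes
\[
\sum_{k=0}^n \binom{n}{k}^2(1+2pH_{2k}-2pH_k)(1-z)^k \pmod{p^2}.
\]

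The second step is to expand $(1-z)^k$ and extract the coefficient of $z^j$. The leading ($p^0$) part is $(-1)^j\binom{n}{j}\binom{2n-j}{n}$ by the Vandermonde--Chu identity $\sum_{k\geq j}\binom{n}{k}^2\binom{k}{j}=\binom{n}{j}\binom{2n-j}{n}$. To expose the precise $p^2$ information, I would expand $\binom{2n-j}{n}=\binom{p-1-j}{n}$ modulo $p^2$ by writing each factor as $p-i-j\equiv -(i+j)(1-p/(i+j))\pmod{p^2}$, which after combining with the $p^2$-expansion of $\binom{n+j}{j}$ yields
\[
\binom{2n-j}{n}\equiv (-1)^{n+j}\binom{n}{j}(1+2pH_{2j}-pH_{n+j})\pmod{p^2}.
\]
On the RHS the factor $16^{p-1}\equiv 1+4p\,q_p(2)\pmod{p^2}$ contributes, where $q_p(2):=(2^{p-1}-1)/p$ is the Fermat quotient. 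Matching coefficients of $z^j$ and cancelling common leading terms reduces the proposed congruence to the harmonic identity
\[
2\sum_{k=j}^n\binom{n}{k}^2\binom{k}{j}(H_{2k}-H_k)\equiv (-1)^{n-j}\binom{n}{j}^2(H_{n+j}+4q_p(2))\pmod p.
\]

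The crux and main obstacle is establishing this harmonic congruence. As a sanity check, in the extreme case $j=n$ it collapses to the well-known Wolstenholme-type relation $H_{n}\equiv -2q_p(2)\pmod p$, which also explains where the Fermat-quotient piece on the right originates. For general $j$ I would try to prove it either by differentiating the mod-$p$ polynomial identity $\sum_k\binom{n}{k}^2(1-z)^k\equiv (-1)^n\sum_j\binom{n}{j}^2 z^j\pmod p$ with respect to a suitable deformation parameter to generate the harmonic weights, or by manipulating the Vandermonde--Chu formula with a weighted kernel to reduce the problem to sums of the type $\sum_k\binom{n}{k}^2\binom{k}{j}H_k$ that admit closed-form evaluations modulo $p$, and finally invoking $H_n\equiv -2q_p(2)\pmod p$ to absorb the $4q_p(2)$ term.
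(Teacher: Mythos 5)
Your reduction steps are all correct: the expansions $\binom{p-1}{k}\equiv(-1)^k(1-pH_k)\pmod{p^2}$, $\binom{2k}{k}^2/16^k\equiv\binom{n}{k}^2(1+2pH_{2k}-pH_k)\pmod{p^2}$ and $\binom{2n-j}{n}\equiv(-1)^{n+j}\binom{n}{j}(1+2pH_{2j}-pH_{n+j})\pmod{p^2}$ (with $n=\frac{p-1}{2}$) all check out, and extracting the coefficient of $z^j$ does reduce the theorem exactly to the harmonic congruence
$$2\sum_{k=j}^{n}\binom{n}{k}^2\binom{k}{j}(H_{2k}-H_k)\equiv(-1)^{n-j}\binom{n}{j}^2\bigl(H_{n+j}+4q_p(2)\bigr)\pmod{p}.$$
But that congruence is the entire technical content of the theorem, and you do not prove it: you verify only the case $j=n$ (Lehmer) and offer two tentative strategies. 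The first strategy as stated would not work directly, because differentiating $\sum_k\binom{n}{k}^2(1-z)^k\equiv(-1)^n\sum_j\binom{n}{j}^2z^j\pmod p$ in $z$ produces no harmonic weights at all; the natural deformation that does produce them is a shift of the upper index (replace $n$ by $n-t$ and differentiate at $t=0$), and that yields weights of type $H_k$, not $H_{2k}-H_k$. Handling the $H_{2k}$ part needs a further idea, such as the symmetry $H_{n-j}\equiv 2H_{2j}-H_j+H_n\pmod p$, which your sketch does not supply. So, as written, the proposal has a genuine gap at its self-identified crux.

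For comparison, the paper avoids ever introducing $H_{2k}$-weights on the left side. Its key trick is the exact mod-$p^2$ pairing $\binom{2k}{k}^2\equiv16^k\binom{(p-1)/2}{k}\binom{-(p+1)/2}{k}\pmod{p^2}$ (the error terms $p^2/4$ cancel in pairs), after which the Chu--Vandermonde identity applies exactly and yields $\binom{p-1-j}{(p-1)/2}$-type main terms; the only harmonic-weighted sum that remains is the $H_k$-weighted one coming from $\binom{p-1}{k}$, and it is evaluated by differentiating the Chu--Vandermonde identity in the upper index (the paper's Lemma 2.2), using Lehmer's congruence $H_{(p-1)/2}\equiv-2q_p(2)\pmod p$ and Morley's congruence; the $H_{2j}$ on the right then emerges from $H_{(p-1)/2-j}\equiv2H_{2j}-H_j+H_{(p-1)/2}\pmod p$. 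Your unproved congruence is in fact true (it is equivalent to the theorem via your correct reduction), and it could likely be established by exactly these tools, but you would need to carry that out to have a complete proof.
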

\begin{lemma}\label{2kkz}
\begin{equation}\label{2kkze}
\sum_{k=0}^{\frac{p-1}{2}}\binom{2k}{k}^2\frac{(1-z)^k}{16^k}\equiv
(-1)^{\frac{p-1}{2}}4^{p-1}\sum_{j=0}^{\frac{p-1}{2}}z^{j}
\binom{\frac{p-1}{2}}{j}^2(1+pH_{\frac{p-1}{2}-j})\pmod{p^2}.
\end{equation}
\end{lemma}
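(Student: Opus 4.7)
Writing $n=(p-1)/2$, the plan is to rewrite the left-hand side of \eqref{2kkze} in terms of the ``Legendre-type'' product $\binom{n}{k}\binom{n+k}{k}$, then invoke a polynomial symmetry to transpose $(z-1)^k$ into $z^j$, and finally match the Fermat-quotient and harmonic corrections produced on each side.

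The cornerstone is the exact congruence
$$
\binom{2k}{k}^2\!\Big/16^k\equiv(-1)^k\binom{n}{k}\binom{n+k}{k}\pmod{p^2}\qquad(0\le k\le n),
$$
which I would establish from the observation that
$$
\binom{n}{k}\binom{n+k}{k}=\frac{1}{4^k(k!)^2}\prod_{i=0}^{k-1}(p-(2i+1))(p+(2i+1))=\frac{1}{4^k(k!)^2}\prod_{i=0}^{k-1}(p^2-(2i+1)^2),
$$
so that modulo $p^2$ this collapses to $(-1)^k((2k-1)!!)^2/(4^k(k!)^2)=(-1)^k\binom{2k}{k}^2/16^k$. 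Substituting into the left-hand side of \eqref{2kkze} yields
$$
\sum_{k=0}^{n}\binom{2k}{k}^2\frac{(1-z)^k}{16^k}\equiv\sum_{k=0}^{n}\binom{n}{k}\binom{n+k}{k}(z-1)^k\pmod{p^2}.
$$

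Next I would prove the exact polynomial identity
$$
\sum_{k=0}^{n}\binom{n}{k}\binom{n+k}{k}(z-1)^k=(-1)^n\sum_{j=0}^{n}\binom{n}{j}\binom{n+j}{j}(-z)^j,
$$
which is a reflection of $P_n(2z-1)=(-1)^nP_n(1-2z)$ for the Legendre polynomial. Expanding $(z-1)^k$ by the binomial theorem, applying $\binom{n}{k}\binom{k}{j}=\binom{n}{j}\binom{n-j}{k-j}$, shifting $k=m+j$, and iterating the finite-difference relation $\Delta\binom{x}{n}=\binom{x}{n-1}$ reduce the problem to the identity
$$
\sum_{m}(-1)^m\binom{n-j}{m}\binom{n+m+j}{n}=(-1)^{n-j}\binom{n+j}{j}.
$$

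It remains to convert $(-1)^j\binom{n}{j}\binom{n+j}{j}$ into $\binom{n}{j}^2$ times the prescribed correction on the right of \eqref{2kkze}. The cornerstone identity (applied with $k=j$) combined with the expansion $\binom{n}{j}\equiv(-1)^j\binom{2j}{j}/4^j\cdot(1-p\sigma_j)\pmod{p^2}$, where $\sigma_j:=H_{2j}-H_j/2$, gives $(-1)^j\binom{n+j}{j}\equiv\binom{n}{j}(1+2p\sigma_j)\pmod{p^2}$. On the other side, Glaisher's congruence $2^{p-1}\equiv 1-pH_n/2\pmod{p^2}$ forces $4^{p-1}\equiv 1-pH_n\pmod{p^2}$, and pairing $i\leftrightarrow p-i$ inside $H_n-H_{n-j}=\sum_{i=n-j+1}^{n}1/i$ yields $H_n-H_{n-j}\equiv-2\sigma_j\pmod p$. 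Consequently $4^{p-1}(1+pH_{n-j})\equiv1+2p\sigma_j\pmod{p^2}$, matching exactly, and \eqref{2kkze} follows. The step I expect to be most delicate is spotting the cornerstone factorization $\binom{2k}{k}^2/16^k\equiv(-1)^k\binom{n}{k}\binom{n+k}{k}\pmod{p^2}$; after this observation, everything reduces to a polynomial identity together with routine Fermat-quotient and harmonic-sum arithmetic modulo $p$.
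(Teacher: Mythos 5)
Your proposal is correct, and it shares the paper's cornerstone but finishes differently. The opening step is the same as the paper's: your congruence $\binom{2k}{k}^2/16^k\equiv(-1)^k\binom{n}{k}\binom{n+k}{k}\pmod{p^2}$ (with $n=\frac{p-1}{2}$) is exactly the paper's reduction $\binom{2k}{k}^2\equiv16^k\binom{\frac{p-1}{2}}{k}\binom{-\frac{p+1}{2}}{k}\pmod{p^2}$, since $\binom{-n-1}{k}=(-1)^k\binom{n+k}{k}$. The transposition of $(1-z)^k$ into powers $z^j$ is also parallel: the paper expands and applies Chu--Vandermonde to get $\binom{-1-j}{n}=(-1)^n\binom{n+j}{n}$, while you package the same computation as the exact Legendre-symmetry identity $\sum_k\binom{n}{k}\binom{n+k}{k}(z-1)^k=(-1)^n\sum_j\binom{n}{j}\binom{n+j}{j}(-z)^j$, proved via the alternating sum $\sum_m(-1)^m\binom{n-j}{m}\binom{n+m+j}{n}=(-1)^{n-j}\binom{n+j}{j}$ (which is valid; it is the finite-difference/Vandermonde evaluation you indicate). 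The genuine divergence is the endgame. The paper writes $\binom{n+j}{n}=\binom{p-1}{n}\binom{n}{j}\big/\binom{p-1}{n+j}$, applies $\binom{p-1}{k}\equiv(-1)^k(1-pH_k)\pmod{p^2}$, and invokes Morley's congruence $\binom{p-1}{n}\equiv(-1)^n4^{p-1}\pmod{p^2}$, so the factors $4^{p-1}$ and $H_{n-j}$ appear directly. You instead expand $\binom{n}{j}\equiv(-1)^j\binom{2j}{j}4^{-j}(1-p\sigma_j)$ with $\sigma_j=H_{2j}-\tfrac12H_j$, deduce $(-1)^j\binom{n+j}{j}\equiv\binom{n}{j}(1+2p\sigma_j)\pmod{p^2}$ (legitimate, since $\binom{n}{j}$ is a $p$-adic unit), and then match $1+2p\sigma_j\equiv4^{p-1}(1+pH_{n-j})\pmod{p^2}$ using Glaisher/Lehmer's $2^{p-1}\equiv1-\tfrac{p}{2}H_n$ and $H_n-H_{n-j}\equiv-2\sigma_j\pmod p$; all of these verifications check out. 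What each route buys: the paper's finish avoids any Fermat-quotient input in this lemma because Morley (mod $p^2$) does the work, whereas your finish avoids citing Morley altogether, replacing it by Lehmer's congruence (which the paper uses anyway in the next lemma) together with elementary odd-harmonic bookkeeping, and it has the minor aesthetic advantage that the sum transposition is an exact polynomial identity rather than a congruence.
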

\begin{proof}
Clearly
\begin{align*}
\binom{2k}{k}^2=16^k\binom{-\frac12}{k}^2
\equiv&
\frac{16^k}{(k!)^2}\prod_{j=0}^{k-1}\bigg(\bigg(-\frac12-j\bigg)^2-\frac{p^2}{4}\bigg)\\
=&\frac{16^k}{(k!)^2}\prod_{j=0}^{k-1}\bigg(\frac{p-1}2-j\bigg)\bigg(-\frac{p+1}2-j\bigg)\\
=&
16^k\binom{\frac{p-1}{2}}{k}\binom{-\frac{p+1}{2}}{k}\pmod{p^2}.
\end{align*}
It follows that
\begin{align*}
\sum_{k=0}^{\frac{p-1}{2}}\binom{2k}{k}^2\frac{(1-z)^k}{16^k}
\equiv&
\sum_{k=0}^{\frac{p-1}{2}}\binom{\frac{p-1}{2}}{k}\binom{-\frac{p+1}{2}}{k}\sum_{j=0}^k\binom{k}{j}(-z)^j\\
=&\sum_{j=0}^{\frac{p-1}{2}}\binom{\frac{p-1}{2}}{j}(-z)^j\sum_{k=j}^{\frac{p-1}{2}}\binom{\frac{p-1}{2}-j}{k-j}\binom{-\frac{p+1}{2}}{k}\pmod{p^2}.
\end{align*}
In view of the Chu-Vandemonde identity \cite[(5.27)]{GKP94}, we have
\begin{align*}
\sum_{k=j}^{\frac{p-1}{2}}\binom{\frac{p-1}{2}-j}{\frac{p-1}{2}-k}\binom{-\frac{p+1}{2}}{k}
=\binom{-1-j}{\frac{p-1}{2}}=(-1)^{\frac{p-1}{2}}\binom{\frac{p-1}{2}+j}{\frac{p-1}{2}}.
\end{align*}
Since
$$
\binom{\frac{p-1}{2}+j}{\frac{p-1}{2}}=
\frac{\binom{p-1}{\frac{p-1}{2}}\binom{\frac{p-1}{2}}{j}}{\binom{p-1}{\frac{p-1}{2}+j}}
$$
and
$$
\binom{p-1}{k}=\prod_{i=1}^k\bigg(\frac{p}{i}-1\bigg)\equiv (-1)^k(1-pH_k)\pmod{p^2},
$$
we obtain that
\begin{align*}
\sum_{k=0}^{\frac{p-1}{2}}\binom{2k}{k}^2\cdot\frac{(1-z)^k}{16^k}\equiv&
\sum_{j=0}^{\frac{p-1}{2}}\binom{\frac{p-1}{2}}{j}(-z)^j\cdot
(-1)^{\frac{p-1}{2}}\frac{\binom{p-1}{\frac{p-1}{2}}\binom{\frac{p-1}{2}}{j}}{\binom{p-1}{\frac{p-1}{2}-j}}\\
\equiv&
\binom{p-1}{\frac{p-1}{2}}\sum_{j=0}^{\frac{p-1}{2}}z^{j}
\binom{\frac{p-1}{2}}{j}^2(1+pH_{\frac{p-1}{2}-j})\pmod{p^2}.
\end{align*}
Using the classical Morley congruence \cite{Morley95}
$$
\binom{p-1}{\frac{p-1}{2}}\equiv (-1)^{\frac{p-1}{2}}4^{p-1}\pmod{p^2},
$$
we get (\ref{2kkze}).
\end{proof}
\begin{lemma}\label{2kkzh}
\begin{align}\label{2kkzhe}
&(-1)^{\frac{p+1}2}\sum_{k=0}^{\frac{p-1}{2}}(1-z)^k\binom{2k}{k}^2\frac{H_k}{16^k}\notag\\
\equiv&\frac{2^{p+1}-4}{p}
\sum_{j=0}^{\frac{p-1}{2}}z^j\binom{\frac{p-1}{2}}{j}^2+
\sum_{j=0}^{\frac{p-1}{2}}z^j\binom{\frac{p-1}{2}}{j}^2H_j\pmod{p}.
\end{align}
\end{lemma}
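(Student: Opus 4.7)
The plan is to follow the template of the proof of Lemma \ref{2kkz}, carrying an extra factor $H_k$ through a harmonic-weighted Chu--Vandermonde manipulation. Since the target congruence is only modulo $p$, I would first reduce $\binom{2k}{k}^2/16^k=\binom{-1/2}{k}^2\equiv\binom{(p-1)/2}{k}^2\pmod{p}$, so that the left-hand side of (\ref{2kkzhe}) is congruent to $(-1)^{(p+1)/2}\sum_{k=0}^{(p-1)/2}\binom{(p-1)/2}{k}^2 H_k(1-z)^k\pmod{p}$. Exactly as in Lemma \ref{2kkz}, I would then expand $(1-z)^k=\sum_j\binom{k}{j}(-z)^j$, swap summations, use $\binom{k}{j}\binom{(p-1)/2}{k}=\binom{(p-1)/2}{j}\binom{(p-1)/2-j}{k-j}$, and apply the symmetry $\binom{(p-1)/2}{m+j}=\binom{(p-1)/2}{(p-1)/2-m-j}$ together with the substitution $l=(p-1)/2-k$ to convert the inner sum to
$$T_j=\sum_l\binom{(p-1)/2-j}{l}\binom{(p-1)/2}{l}H_{(p-1)/2-l},$$
so that $\sum_k\binom{(p-1)/2}{k}^2 H_k(1-z)^k=\sum_j(-z)^j\binom{(p-1)/2}{j}T_j$.

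The crucial step is the closed-form evaluation of $T_j$. Differentiating Vandermonde's identity $\sum_l\binom{A}{l}\binom{B}{l}=\binom{A+B}{A}$ with respect to $B$ (treating $B$ as a continuous parameter and using $\frac{d}{dB}\binom{B}{l}\big|_{B=n}=\binom{n}{l}(H_n-H_{n-l})$) yields the harmonic Chu--Vandermonde identity
$$\sum_l\binom{A}{l}\binom{B}{l}H_{B-l}=\binom{A+B}{A}\bigl(2H_B-H_{A+B}\bigr),\qquad A,B\in\mathbb{Z}_{\geq 0}.$$
Setting $A=(p-1)/2-j$ and $B=(p-1)/2$ gives $T_j=\binom{p-1-j}{(p-1)/2}\bigl(2H_{(p-1)/2}-H_{p-1-j}\bigr)$.

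The remainder is mod-$p$ bookkeeping of the same flavour as in Lemma \ref{2kkz}. The identities $H_{p-1-j}\equiv H_j\pmod p$ (from $H_{p-1}\equiv 0\pmod p$ and $1/(p-i)\equiv-1/i\pmod p$) and $\binom{(p-1)/2}{j}\binom{p-1-j}{(p-1)/2}\equiv(-1)^{(p-1)/2+j}\binom{(p-1)/2}{j}^2\pmod p$ (already implicit in the proof of Lemma \ref{2kkz}) give
$$\sum_k\binom{(p-1)/2}{k}^2 H_k(1-z)^k\equiv(-1)^{(p-1)/2}\sum_j z^j\binom{(p-1)/2}{j}^2\bigl(2H_{(p-1)/2}-H_j\bigr)\pmod p.$$
Multiplying by $(-1)^{(p+1)/2}$ and invoking the classical congruence of Eisenstein $H_{(p-1)/2}\equiv-2(2^{p-1}-1)/p\pmod p$, which rewrites $-2H_{(p-1)/2}$ as $(2^{p+1}-4)/p\pmod p$, produces the right-hand side of (\ref{2kkzhe}).

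The only non-routine step is the derivation of the harmonic Chu--Vandermonde identity for $T_j$; once that is in hand, the remaining manipulations are parallel to those in Lemma \ref{2kkz}, so this is the one step that I expect to require genuine new input.
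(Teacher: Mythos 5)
Your proposal is correct and follows essentially the same route as the paper: reduce $\binom{2k}{k}^2/16^k\equiv\binom{(p-1)/2}{k}^2\pmod p$, swap summations via the absorption identity, evaluate the harmonic-weighted inner sum by differentiating the Chu--Vandermonde identity in a continuous parameter, and finish with $H_{p-1-j}\equiv H_j$ and the Lehmer--Eisenstein congruence for $H_{(p-1)/2}$. The only cosmetic difference is that you package the differentiation step as the exact identity $\sum_l\binom{A}{l}\binom{B}{l}H_{B-l}=\binom{A+B}{A}(2H_B-H_{A+B})$, whereas the paper performs the same $\frac{d}{dt}$-argument inside the specific sum.
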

\begin{proof}
Clearly
$$
\binom{2k}{k}=(-4)^k\binom{-\frac12}{k}\equiv(-4)^k\binom{\frac{p-1}2}{k}\pmod{p}.
$$
Hence
\begin{align*}
\sum_{k=0}^{\frac{p-1}{2}}\binom{2k}{k}^2\frac{(1-z)^kH_k}{16^k}\equiv&\sum_{k=0}^{\frac{p-1}{2}}\binom{\frac{p-1}{2}}{k}^2H_k\sum_{j=0}^k\binom{k}j(-z)^j\\
\equiv&\sum_{j=0}^{\frac{p-1}{2}}\binom{\frac{p-1}{2}}{j}(-z)^j
\sum_{k=j}^{\frac{p-1}{2}}\binom{\frac{p-1}{2}-j}{k-j}\binom{\frac{p-1}{2}}{k}H_k\pmod{p}.
\end{align*}
Apparently
$$
\lim_{t\to0}\frac{d}{dt}\bigg(\binom{\frac{p-1}{2}-t}{\frac{p-1}{2}-k}\bigg)=
-\binom{\frac{p-1}{2}}{\frac{p-1}{2}-k}\sum_{i=0}^{\frac{p-1}{2}-k-1}\frac{1}{\frac{p-1}{2}-i}=
\binom{\frac{p-1}{2}}{k}(H_{k}-H_{\frac{p-1}{2}}).
$$
Note that
$$
\sum_{k=j}^{\frac{p-1}{2}}\binom{\frac{p-1}{2}-j}{k-j}
\binom{\frac{p-1}{2}-t}{\frac{p-1}{2}-k}=
\binom{p-1-j-t}{\frac{p-1}{2}-j},
$$
and
\begin{align*}
\lim_{t\to0}\frac{d}{dt}\bigg(\binom{p-1-j-t}{\frac{p-1}{2}-j}\bigg)=&
-\binom{p-1-j}{\frac{p-1}{2}-j}\sum_{i=0}^{\frac{p-1}{2}-j-1}\frac1{p-1-j-i}\\
\equiv&
\binom{p-1-j}{\frac{p-1}{2}-j}\sum_{i=0}^{\frac{p-1}{2}-j-1}\frac1{i+j+1}\\
=&
\binom{p-1-j}{\frac{p-1}{2}-j}(H_{\frac{p-1}{2}}-H_j)\pmod{p}.
\end{align*}
We obtain that
\begin{align*}
&\sum_{k=j}^{\frac{p-1}{2}}\binom{\frac{p-1}{2}-j}{k-j}\binom{\frac{p-1}{2}}{k}H_k\\
=&\sum_{k=j}^{\frac{p-1}{2}}\binom{\frac{p-1}{2}-j}{\frac{p-1}{2}-k}\binom{\frac{p-1}{2}}{k}H_{\frac{p-1}2}
+\lim_{t\to0}\frac{d}{dt}\bigg(\sum_{k=j}^{\frac{p-1}{2}}\binom{\frac{p-1}{2}-j}{k-j}
\binom{\frac{p-1}{2}-t}{\frac{p-1}{2}-k}\bigg)
\\
\equiv&\binom{p-1-j}{\frac{p-1}{2}-j}
(2H_{\frac{p-1}{2}}-H_j)=(-1)^{\frac{p-1}{2}-j}
\binom{-\frac{p+1}2}{\frac{p-1}{2}-j}
(2H_{\frac{p-1}{2}}-H_j)\\
\equiv&(-1)^{\frac{p-1}{2}-j}
\binom{\frac{p-1}2}{\frac{p-1}{2}-j}
(2H_{\frac{p-1}{2}}-H_j)\pmod{p}.
\end{align*}
Thus (\ref{2kkzhe}) immediately follows from a well-known congruence of Lehmer \cite{Lehmer38}:
$$
H_{\frac{p-1}{2}}\equiv-\frac{2^p-2}{p}\pmod{p}.
$$
\end{proof}
\begin{proof}[Proof of Theorem \ref{thm3}] By the Fermat little theorem,
$$
4^{p-1}=(1+2^{p-1}-1)^2\equiv1+2(2^{p-1}-1)\pmod{p^2}.
$$
Combining Lemmas \ref{2kkz} and \ref{2kkzh}, we obtain that
\begin{align*}
&(-1)^{\frac{p-1}{2}}\sum_{k=0}^{\frac{p-1}{2}}\binom{p-1}{k}\binom{2k}{k}^2\frac{(z-1)^k}{16^k}\\
\equiv&
(-1)^{\frac{p-1}{2}}\sum_{k=0}^{\frac{p-1}{2}}\binom{2k}{k}^2\frac{(1-z)^k}{16^k}(1-pH_k)\\
\equiv&(3\cdot 2^{p}-5)\sum_{j=0}^{\frac{p-1}{2}}z^j\binom{\frac{p-1}{2}}{j}^2+
p\sum_{j=0}^{\frac{p-1}{2}}z^j\binom{\frac{p-1}{2}}{j}^2(H_j+H_{\frac{p-1}{2}-j})\pmod{p^2}.
\end{align*}
On the other hand,
\begin{align*}
H_{\frac{p-1}{2}-j}=&\sum_{i=j+1}^{\frac{p-1}{2}}\frac{1}{\frac{p+1}{2}-i}\equiv
-\sum_{i=j+1}^{\frac{p-1}{2}}\frac{2}{2i-1}\\
=&\sum_{i=1}^{j}\frac{2}{2i-1}-\sum_{i=1}^{\frac{p-1}{2}}\frac{2}{2i-1}=(2H_{2j}-H_j)-
(2H_{p-1}-H_{\frac{p-1}2})\\
\equiv&2H_{2j}-H_j+H_{\frac{p-1}2}\pmod{p},
\end{align*}
where in the last step we use the well-known fact
$$
H_{p-1}\equiv0\pmod{p}.
$$
Thus
\begin{align*}
&(-1)^{\frac{p-1}{2}}\sum_{k=0}^{\frac{p-1}{2}}\binom{p-1}{k}\binom{2k}{k}^2\frac{(z-1)^k}{16^k}\\
\equiv&
(3\cdot 2^{p}-5)\sum_{j=0}^{\frac{p-1}{2}}z^j\binom{\frac{p-1}{2}}{j}^2+
p\sum_{j=0}^{\frac{p-1}{2}}z^j\binom{\frac{p-1}{2}}{j}^2(2H_{2j}+H_{\frac{p-1}2})\\
\equiv&(2^{p+1}-3)\sum_{j=0}^{\frac{p-1}{2}}z^j\binom{\frac{p-1}{2}}{j}^2+
2p\sum_{j=0}^{\frac{p-1}{2}}z^j\binom{\frac{p-1}{2}}{j}^2H_{2j}
\pmod{p^2}.
\end{align*}
Finally, we have
$$
16^{p-1}=(1+2^{p-1}-1)^4\equiv1+4(2^{p-1}-1)\pmod{p^2}.
$$
The proof of (\ref{2kkzh2j}) is concluded.\end{proof}

Now Theorem \ref{thm2} is an easy consequence of Theorem \ref{thm3}. Let $\alpha$ and $\beta$ be the two roots of
the equation $x^2-ax+b=0$. Then $\alpha+1$ and $\beta+1$ are also the two roots of $x^2-(a+2)x+(b+a+1)=0$. It is well-known (cf. \cite{Ribenboim00}) that
$$
V_n(a,b)=\alpha^n+\beta^n,\qquad
V_n(a+2,b+a+1)=(\alpha+1)^n+(\beta+1)^n
$$
for each $n\geq 0$. By Theorem \ref{thm3},
\begin{align*}
&\sum_{k=0}^{\frac{p-1}{2}}\binom{p-1}{k}\binom{2k}{k}^2\frac{\alpha^k+\beta^k}{16^k}\\
\equiv&(-1)^{\frac{p-1}{2}}16^{p-1}\sum_{j=0}^{\frac{p-1}{2}}
((\alpha+1)^k+(\beta+1)^k)\binom{\frac{p-1}{2}}{j}^2(1+2pH_{2j})\pmod{p^2}.
\end{align*}
Then (\ref{congvn}) is derived.

Furthermore, we also have
$$
U_n(a,b)=\frac{\alpha^n-\beta^n}{\alpha-\beta},\qquad
U_n(a+2,b+a+1)=\frac{(\alpha+1)^n-(\beta+1)^n}{\alpha-\beta}.
$$
Suppose that $p^2$ doesn't divides $a^2-4b$. Then $\alpha-\beta=\pm\sqrt{a^2-4b}$ is not divisible by $p$.
So we have
\begin{align*}
&\sum_{k=0}^{\frac{p-1}{2}}\binom{p-1}{k}\binom{2k}{k}^2\frac{\alpha^k-\beta^k}{16^k(\alpha-\beta)}\\
\equiv&(-1)^{\frac{p-1}{2}}16^{p-1}\sum_{j=0}^{\frac{p-1}{2}}
\frac{(\alpha+1)^k-(\beta+1)^k}{\alpha-\beta}\binom{\frac{p-1}{2}}{j}^2(1+2pH_{2j})\pmod{p^{2}},
\end{align*}
by noting that both sides of the above congruences are factly rational $p$-integers.

\section{Quadratic hypergeometric transformations}
\setcounter{lemma}{0}
\setcounter{theorem}{0}
\setcounter{corollary}{0}
\setcounter{remark}{0}
\setcounter{equation}{0}
\setcounter{conjecture}{0}

In this section, we shall use the quadratic hypergeometric transformations to deduce some auxiliary results on $P_n$, $R_n$ and $W_n$, which is necessary for the proof of (\ref{congp}), (\ref{congr}) and (\ref{congw}). Define the hypergeometric function
\begin{equation}\label{F21def}
{}_2F_1\bigg(\begin{matrix}a&b\\ &c\end{matrix}\bigg|z\bigg)=\sum_{k=0}^\infty\frac{(a)_k(b)_k}{(c)_k}\cdot\frac{z^k}{k!},
\end{equation}
where $c\not\in\{0,-1,-2,\ldots\}$ and
$$
(a)_k=\begin{cases}a(a+1)\cdots(a+k-1),&\text{if }k\geq1,\\
1,&\text{if }k=0.
\end{cases}
$$
Clearly (\ref{F21def}) is convergent whenever $|z|<1$.
And if $n$ is a non-negative integer, then
$$
\sum_{k=0}^n\binom{n}{k}\binom{m}{k}z^k=\sum_{k=0}^\infty\frac{(-n)_k(-m)_k}{(1)_k}\cdot\frac{z^k}{k!}
={}_2F_1\bigg(\begin{matrix}-n&-m\\ &1\end{matrix}\bigg|z\bigg).
$$

\begin{lemma} {\rm (i)} Suppose that $p\equiv 5,7\pmod{8}$ is prime. Then
\begin{equation}\label{alphacong1}
\sum_{k=0}^{\frac{p-1}2}\binom{\frac{p-1}2}{k}^2(1+\sqrt{2})^{2k}\equiv
0\pmod{p}.
\end{equation}

{\rm (ii)} Suppose that $p\equiv2\pmod{3}$ is a prime.
Then
\begin{equation}\label{alphacong2}
\sum_{k=0}^{\frac{p-1}{2}}\binom{\frac{p-1}{2}}{k}^2(-1)^k(2+\sqrt{3})^{2k}\equiv0\pmod{p}.
\end{equation}

{\rm (iii)} Suppose that $p\equiv3\pmod{4}$ is a prime. Then
\begin{equation}\label{alphacong3}
\sum_{k=0}^{\frac{p-1}2}\binom{\frac{p-1}2}{k}^2(3+2\sqrt{2})^{2k}\equiv0\pmod{p}.
\end{equation}
\end{lemma}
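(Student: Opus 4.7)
The plan is to deduce each of the three congruences from a $p$-adic, truncated form of a classical quadratic hypergeometric transformation, after first exploiting two elementary symmetries of the sum
$$
S(\lambda):=\sum_{k=0}^{(p-1)/2}\binom{(p-1)/2}{k}^{2}\lambda^{k}.
$$
Since $\binom{(p-1)/2}{k}^{2}\equiv\binom{2k}{k}^{2}/16^{k}\pmod p$, Lemma \ref{2kkz} applied with $z\leftrightarrow 1-z$ reads $S(\lambda)\equiv(-1)^{(p-1)/2}S(1-\lambda)\pmod p$, and reversal of the summation index yields $S(\lambda)\equiv\lambda^{(p-1)/2}S(1/\lambda)\pmod p$. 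The first symmetry at $\lambda=1/2$ gives $S(1/2)\equiv 0\pmod p$ whenever $p\equiv 3\pmod 4$. Combining both at the sixth root of unity $\zeta_{6}=(1+\sqrt{-3})/2$, which satisfies $1-\zeta_{6}=1/\zeta_{6}=\bar\zeta_{6}$, produces $S(\bar\zeta_{6})\bigl((-1)^{(p-1)/2}-\zeta_{6}^{(p-1)/2}\bigr)\equiv 0$ in $\F_{p^{2}}$; the parenthesised factor is a unit when $p\equiv 2\pmod 3$, so $S(\zeta_{6})\equiv 0\pmod p$ in that case.

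Next I would invoke the truncated mod-$p$ form of the Landen quadratic transformation
$$
{}_{2}F_{1}\bigg(\begin{matrix}1/2 & 1/2 \\ & 1\end{matrix}\bigg|\,z\bigg)=\frac{2}{1+\sqrt{1-z}}\,{}_{2}F_{1}\bigg(\begin{matrix}1/2 & 1/2 \\ & 1\end{matrix}\bigg|\,\bigg(\frac{1-\sqrt{1-z}}{1+\sqrt{1-z}}\bigg)^{\!2}\bigg),
$$
which after truncation becomes a relation $S(z)\equiv c_{z}\,S(z')\pmod p$ with $z'=\bigl((1-\sqrt{1-z})/(1+\sqrt{1-z})\bigr)^{2}$ and $c_{z}$ invertible. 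For (iii), taking $z=1/2$ gives $z'=(3-2\sqrt 2)^{2}$; the seed vanishing $S(1/2)\equiv 0$ for $p\equiv 3\pmod 4$ then forces $S((3-2\sqrt 2)^{2})\equiv 0$, and Galois conjugation over $\Q(\sqrt 2)/\Q$ delivers \eqref{alphacong3}. For (ii), taking $z=\zeta_{6}$ a direct calculation yields $z'=-(2-\sqrt 3)^{2}$, so $S(\zeta_{6})\equiv 0$ implies $S(-(2-\sqrt 3)^{2})\equiv 0$ and Galois conjugation over $\Q(\sqrt 3)/\Q$ gives \eqref{alphacong2}. For (i), I would either iterate the Landen transformation or compose it with the $S_{3}$-symmetries above to reach $(1+\sqrt 2)^{2}$ from a seed whose sum vanishes for $p\equiv 5,7\pmod 8$.

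The main obstacle will be justifying the Landen identity in its $p$-adic truncated form, because $\sqrt{1-z}$ may lie in $\F_{p^{2}}\setminus\F_{p}$ (for example when $p\equiv 5\pmod 8$ in (i) and when $p\equiv 2\pmod 3$ in (ii)). I would address this either by squaring the identity and using Clausen's formula $S(w)^{2}\equiv\sum_{k=0}^{(p-1)/2}\binom{2k}{k}^{3}(4w(1-w))^{k}/64^{k}\pmod p$ to eliminate the radical, thereby reducing to a polynomial identity over $\F_{p}$, or by extending scalars to $\F_{p^{2}}$ and verifying Frobenius compatibility of both sides. A secondary difficulty is that in (i) with $p\equiv 5\pmod 8$ no elementary seed in $\F_{p}$ is forced to vanish by the two symmetries alone, so that case will likely require either a second Landen-type transformation on top of the first, or a transformation corresponding to the endomorphism $\sqrt{-2}$ of the CM elliptic curve of discriminant $-8$ attached to $\lambda=(1+\sqrt 2)^{2}$.
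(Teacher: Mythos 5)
Your two symmetries and the seed computations are fine, and your Landen images are computed correctly ($z=1/2\mapsto(3-2\sqrt2)^2$, $z=\zeta_6\mapsto-(2-\sqrt3)^2$), but the proposal has two genuine gaps. First, the step on which everything rests --- the truncated mod-$p$ Landen congruence $S(z)\equiv c_zS(z')\pmod p$ --- is exactly the hard point, and you only assert it, offering Clausen's formula or ``Frobenius compatibility'' as possible repairs without carrying either out. An analytic identity for ${}_2F_1(\tfrac12,\tfrac12;1;z)$ does not truncate termwise, so some genuine input (Hasse-invariant/supersingularity theory for the Legendre family, or Z.-H. Sun's Legendre-polynomial congruences) is needed here; this is a substantive missing proof, not a routine verification. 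The paper avoids the issue entirely: it applies the quadratic transformations (3.1.4), (3.1.9), (3.1.11) of Andrews--Askey--Roy directly to the \emph{terminating} series ${}_2F_1\bigl(-\tfrac{p-1}2,-\tfrac{p-1}2;1;z\bigr)$ (or $-\tfrac{3p-1}2$ in place of $-\tfrac{p-1}2$, via Lucas' theorem, when $p\equiv7\pmod 8$), so both sides are finite sums, the identities hold verbatim, and the right-hand sides are killed mod $p$ either by $\sum_k(-1)^k\binom nk^2=0$ for odd $n$ or, in case (ii), by an explicit Gamma-quotient evaluation at $-\tfrac13$ containing the factor $p$. Also, a minor slip: when the relevant radical lies in $\F_p$ (e.g.\ $\sqrt2\in\F_p$ for $p\equiv7\pmod8$, $\sqrt3\in\F_p$ for $p\equiv11\pmod{12}$), ``Galois conjugation over $\Q(\sqrt2)/\Q$'' does not act on the mod-$p$ values; you should instead use your reversal symmetry, since the two arguments are reciprocals.

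Second, and more seriously, part (i) is not proved at all, and the route you sketch cannot work as described. For $p\equiv5\pmod8$ you admit there is no elementary seed; but even for $p\equiv7\pmod8$, iterating the Landen map and the $S_3$-symmetries from the seeds $\lambda\in\{1/2,2,-1\}$ or $\lambda=\zeta_6$ can never reach $\lambda=(1+\sqrt2)^2$: a Landen step corresponds to a $2$-isogeny of the Legendre curve $y^2=x(x-1)(x-\lambda)$ (up to twist), hence preserves the endomorphism algebra, and $(1+\sqrt2)^2$ corresponds to CM by $\Z[\sqrt{-2}]$ while your seeds correspond to $j=1728$ (CM by $\Z[i]$) and $j=0$ (CM by $\Z[\zeta_3]$). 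Concretely, one Landen step from $1/2$ lands in the $S_3$-orbit of $17\pm12\sqrt2$, which does not contain $3\pm2\sqrt2$, and the Landen preimages of $3+2\sqrt2$ lie in its own orbit, so the chain is circular. The ``transformation corresponding to the endomorphism $\sqrt{-2}$'' that you mention as a fallback is precisely the missing content; the paper supplies it by choosing $z=(1+\sqrt2)^2$ so that $-4z/(1-z)^2=-1$ in transformation (3.1.4), which converts the sum into $\sum_k\binom{\frac{p-1}4}{k}\binom{-\frac{p+1}4}{k}(-1)^k\equiv\sum_k(-1)^k\binom{\frac{p-1}4}{k}^2\pmod p$ (with the $\frac{3p-1}2$ variant for $p\equiv7\pmod 8$), which vanishes. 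Without constructing and justifying such a transformation, case (i) --- the one needed for the Pell congruence \eqref{congp} --- remains open in your argument.
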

\begin{proof}
(i) We need the following quadratic transformation for the hypergeometric functions \cite[(3.1.4)]{AAR99}:
\begin{equation}\label{qthf1}
{}_2F_1\bigg(\begin{matrix}a&b\\ &a-b+1\end{matrix}\bigg|z\bigg)=
(1-z)^{-a}{}_2F_1\bigg(\begin{matrix}\frac12a&\frac12a-b+\frac12\\ &a-b+1\end{matrix}\bigg|-\frac{4z}{(1-z)^2}\bigg),
\end{equation}
where $|z|<1$.
Let $z=(1+\sqrt{2})^2$. It is easy to check that
$$
-\frac{4z}{(1-z)^2}=-1.
$$

Suppose that $p\equiv5\pmod{8}$. Substituting $a=b=-\frac{p-1}{2}$ in (\ref{qthf1}), we get
\begin{align*}
\sum_{k=0}^{\frac{p-1}2}\binom{\frac{p-1}2}{k}^2z^k=&
{}_2F_1\bigg(\begin{matrix}-\frac{p-1}2&-\frac{p-1}2\\ &1\end{matrix}\bigg|z\bigg)\\
=&
(1-z)^{\frac{p-1}{2}}{}_2F_1\bigg(\begin{matrix}-\frac{p-1}4&\frac{p+1}4\\ &1\end{matrix}\bigg|-1\bigg).
\end{align*}
Notice that both ${}_2F_1\Big(\begin{matrix}-\frac{p-1}2&-\frac{p-1}2\\ &1\end{matrix}\Big|z\Big)$ and ${}_2F_1\Big(\begin{matrix}-\frac{p-1}4&\frac{p+1}4\\ &1\end{matrix}\Big|-1\Big)$ are factly the finite summations. So
the requirement $|z|<1$ can be ignored here. Then (\ref{alphacong1}) follows from that
\begin{align*}
{}_2F_1\bigg(\begin{matrix}-\frac{p-1}4&\frac{p+1}4\\ &1\end{matrix}\bigg|-1\bigg)=
&\sum_{k=0}^{\frac{p-1}4}\binom{\frac{p-1}4}{k}\binom{-\frac{p+1}4}{k}(-1)^k\\
\equiv&
\sum_{k=0}^{\frac{p-1}4}\binom{\frac{p-1}4}{k}^2(-1)^k=0\pmod{p},
\end{align*}
where in the last step we use the well-known fact \cite[(5.55)]{GKP94} that
\begin{equation}\label{ank2}
\sum_{k=0}^{n}(-1)^k\binom{n}{k}^2=\begin{cases}(-1)^{\frac n2}\binom{n}{\frac n2},&\text{if }n\text{ is even},\\
0,&\text{if }n\text{ is odd}.
\end{cases}
\end{equation}

Suppose that $p\equiv7\pmod{8}$. By the Lucas theorem (cf. \cite[(1)]{Granville97}), we have
$$
\binom{\frac{3p-1}{2}}{k}\equiv\binom{\frac{3p-1}{2}}{p+k}\equiv\binom{\frac{p-1}{2}}{k}\pmod{p}
$$
for $0\leq k\leq\frac{p-1}2$. Hence
$$
\sum_{k=0}^{\frac{3p-1}2}\binom{\frac{3p-1}2}{k}^2z^k\equiv
(1+z^p)\sum_{k=0}^{\frac{p-1}2}\binom{\frac{p-1}2}{k}^2z^k\pmod{p}.
$$
Note that
$$
1+z^p\equiv(1+z)^p=(4+2\sqrt{2})^p\pmod{p}.
$$
Then $1+z^p$ is prime to $p$ since $4+2\sqrt{2}$ is prime to $p$.
Using (\ref{qthf1}) with $a=b=\frac{3p-1}{2}$, we obtain that
\begin{align*}
\sum_{k=0}^{\frac{3p-1}2}\binom{\frac{3p-1}2}{k}^2z^k=&
{}_2F_1\bigg(\begin{matrix}-\frac{3p-1}2&-\frac{3p-1}2\\ &1\end{matrix}\bigg|z\bigg)\\
=&(1-z)^{\frac{3p-1}{2}}{}_2F_1\bigg(\begin{matrix}-\frac{3p-1}4&\frac{3p+1}4\\ &1\end{matrix}\bigg|-1\bigg).
\end{align*}
In view of (\ref{ank2}), we have
\begin{align*}
{}_2F_1\bigg(\begin{matrix}-\frac{3p-1}4&\frac{3p+1}4\\ &1\end{matrix}\bigg|-1\bigg)
=&\sum_{k=0}^{\frac{3p-1}4}\binom{\frac{3p-1}4}{k}\binom{-\frac{3p+1}4}{k}(-1)^k\\
\equiv&\sum_{k=0}^{\frac{3p-1}4}\binom{\frac{3p-1}4}{k}^2(-1)^k=0\pmod{p}.
\end{align*}
So (\ref{alphacong1}) is also valid when $p\equiv 7\pmod{8}$.

(ii) We shall use another quadratic transformation as follows \cite[(3.1.9)]{AAR99}:
\begin{equation}\label{qthf2}
{}_2F_1\bigg(\begin{matrix}a&b\\ &a-b+1\end{matrix}\bigg|z\bigg)=(1+z)^{-a}{}_2F_1\bigg(\begin{matrix}\frac12a&\frac12a+\frac12\\ &a-b+1\end{matrix}\bigg|\frac{4z}{(1+z)^2}\bigg).
\end{equation}
Let $z=-(2+\sqrt{3})^2$. Then we have
$$\frac{4z}{(1+z)^2}=-\frac13.$$
Applying (\ref{qthf2}) with $a=b=-\frac{p-1}2$, we get that
$$\sum_{k=0}^{\frac{p-1}2}\bi{\frac{p-1}2}k^2z^k={}_2F_1\bigg(\begin{matrix}-\frac{p-1}2&-\frac{p-1}2\\ &1\end{matrix}\bigg|z\bigg)=(1+z)^{\frac{p-1}2}{}_2F_1\bigg(\begin{matrix}-\frac{p-1}4&-\frac{p-3}4\\ &1\end{matrix}\bigg|-\frac13\bigg).$$

It suffices to show that
$$
{}_2F_1\bigg(\begin{matrix}-\frac{p-1}4&-\frac{p-3}4\\ &1\end{matrix}\bigg|-\frac13\bigg)\equiv0\pmod{p}
$$
when $p\equiv 2\pmod{3}$.
Note that
\begin{align*}
{}_2F_1\bigg(\begin{matrix}-\frac{p-1}4&-\frac{p-3}4\\ &1\end{matrix}\bigg|-\frac13\bigg)=&\sum_{k=0}^{\frac{p-3}4}\frac{(-\frac{p-1}{4})_k(-\frac{p-3}{4})_k}{(1)_kk!}\cdot\bigg(-\frac13\bigg)^k
\\\equiv&\sum_{k=0}^{\f{p-3}4}\frac{(-\frac{p-1}{4})_k(-\frac{p-3}{4})_k}{\big(\frac p2+1\big)_kk!}\cdot\bigg(-\frac13\bigg)^k\\
=&{}_2F_1\bigg(\begin{matrix}-\frac{p-1}4&-\frac{p-1}4+\frac12\\ &\f p2+1\end{matrix}\bigg|-\frac13\bigg)\pmod p.
\end{align*}
It is known \cite[15.4.31]{NIST}  that
$${}_2F_1\bigg(\begin{matrix}a&a+\frac12\\ &\frac32-2a\end{matrix}\bigg|-\frac13\bigg)=\l(\frac89\r)^{-2a}\cdot\frac{\Gamma{(\frac43)}\Gamma{(\frac32-2a)}}{\Gamma{(\frac32)}\Gamma{(\frac43-2a)}}.$$
Thus
\begin{align*}
{}_2F_1\bigg(\begin{matrix}-\frac{p-1}4&-\frac{p-1}4+\frac12\\ &\frac  p2+1\end{matrix}\bigg|-\frac13\bigg)
=&\bigg(\frac89\bigg)^{\frac{p-1}2}\cdot\frac{\Gamma{(\frac43)}\Gamma{(\frac32+\frac{p-1}2)}}{\Gamma{(\frac32)}\Gamma{(\frac43+\frac{p-1}2)}}.
\end{align*}
When $p\equiv 2\pmod{3}$, $3j+4\neq p$ for any $0\leq j\leq\frac{p-1}2-1$.
But $2j+3=p$ if $j=\frac{p-1}2-1$. So for prime $p\equiv2\pmod{3}$, we always have
$$
{}_2F_1\bigg(\begin{matrix}-\frac{p-1}4&-\frac{p-3}4\\ &\frac  p2+1\end{matrix}\bigg|-\frac13\bigg)=\frac{8^{\frac{p-1}2}}{9^{\frac{p-1}2}}\prod_{j=0}^{\frac{p-1}2-1}\frac{\frac32+j}{\frac43+j}\equiv0\pmod p.$$

(iii)
According to \cite[(3.1.11)]{AAR99}, we have
\begin{equation}\label{qthf3}
{}_2F_1\bigg(\begin{matrix}a&b\\ &a-b+1\end{matrix}\bigg|z^2\bigg)=
(1+z)^{-2a}{}_2F_1\bigg(\begin{matrix}a&a-b+\frac12\\ &2a-2b+1\end{matrix}\bigg|\frac{4z}{(1+z)^2}\bigg).
\end{equation}
Let $z=-(3+2\sqrt{2})$. Apparently
$$
\frac{4z}{(1+z)^2}=-1.
$$
It follows from (\ref{qthf3}) that
\begin{align*}
\sum_{k=0}^{\frac{p-1}2}\binom{\frac{p-1}2}{k}^2z^{2k}=&
{}_2F_1\bigg(\begin{matrix}-\frac{p-1}2&-\frac{p-1}2\\ &1\end{matrix}\bigg|z^2\bigg)\\
=&
(1+z)^{p-1}{}_2F_1\bigg(\begin{matrix}-\frac{p-1}2&\frac12\\ &1\end{matrix}\bigg|-1\bigg).
\end{align*}
If $p\equiv 3\pmod{4}$, then
\begin{align*}
{}_2F_1\bigg(\begin{matrix}-\frac{p-1}2&\frac12\\ &1\end{matrix}\bigg|-1\bigg)=&\sum_{k=0}^{\frac{p-1}{2}}\binom{\frac{p-1}{2}}{k}
\binom{-\frac{1}{2}}{k}(-1)^k\\
\equiv&
\sum_{k=0}^{\frac{p-1}{2}}\binom{\frac{p-1}{2}}{k}^2(-1)^k=0\pmod{p}.
\end{align*}
Thus (\ref{alphacong3}) is also confirmed.
\end{proof}
\begin{lemma} Suppose that $p$ is a prime.
\medskip

{\rm (i)} If $p\equiv 7\pmod{8}$, then
\begin{equation}\label{p2kh78}
\sum_{k=0}^{\frac{p-1}2}\binom{\frac{p-1}2}{k}^2P_{2k}\equiv0\pmod{p^2}.
\end{equation}

{\rm (ii)} If $p\equiv 11\pmod{12}$, then
\begin{equation}\label{r2kh1112}
\sum_{k=0}^{\frac{p-1}2}\binom{\frac{p-1}2}{k}^2(-1)^kR_{2k}\equiv0\pmod{p^2}.
\end{equation}

{\rm (iii)} If $p\equiv 7\pmod{8}$, then
\begin{equation}\label{w2kh78}
\sum_{k=0}^{\frac{p-1}2}\binom{\frac{p-1}2}{k}^2\frac{W_{2k}}{2^k}\equiv0\pmod{p^2}.
\end{equation}
\end{lemma}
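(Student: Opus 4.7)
The plan is to handle all three parts uniformly, by combining Galois symmetry with the quadratic transformation (\ref{qthf2}) and invoking the preceding lemma for the divisibility of the resulting ${}_2F_1$ factor.

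Write $q=(p-1)/2$ and $T(z):=\sum_{k=0}^{q}\binom{q}{k}^2 z^k$. The three target sums can be expressed through $T$ at conjugate pairs of algebraic units. With $\al=(1+\sqrt 2)^2$, $\be=(1-\sqrt 2)^2$, $\al'=-(7+4\sqrt 3)$, $\be'=-(7-4\sqrt 3)$, $\ga=(1+\sqrt 2)^4=17+12\sqrt 2$, and $\ov\ga=17-12\sqrt 2$, one has
$$
\sum_{k=0}^{q}\binom{q}{k}^2 P_{2k}=\f{T(\al)-T(\be)}{2\sqrt 2},\qquad\sum_{k=0}^{q}\binom{q}{k}^2(-1)^k R_{2k}=T(\al')+T(\be'),
$$
and, via the identity $W_{2k}/2^k=P_{4k}/3$ (obtained from $(4\pm 3\sqrt 2)^2=2(1\pm\sqrt 2)^4$),
$$
\sum_{k=0}^{q}\binom{q}{k}^2\f{W_{2k}}{2^k}=\f{T(\ga)-T(\ov\ga)}{6\sqrt 2}.
$$

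Applying (\ref{qthf2}) with $a=b=-q$ gives
$$
T(z)=(1+z)^{q}\cdot{}_2F_1\!\l(-\tfrac{q}{2},\,-\tfrac{q-1}{2};\,1;\,\tfrac{4z}{(1+z)^2}\r),
$$
which is a polynomial identity in $z$ because $-(q-1)/2$ is a nonpositive integer when $q$ is odd, i.e.\ $p\equiv 3\pmod 4$---a condition satisfied in all three parts. A short calculation shows that $4z/(1+z)^2$ equals $1/2$ at both $\al$ and $\be$ (these are the two roots of $z^2-6z+1=0$), equals $-1/3$ at both $\al'$ and $\be'$ (as in the proof of the preceding lemma~(ii)), and equals $1/9$ at both $\ga$ and $\ov\ga$ (since $(1+\ga)^2=36\ga$). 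Hence the ${}_2F_1$ factor takes a single rational value $e$ at each conjugate pair, and the target sums factor as $e\cdot[(1+z)^{q}\pm(1+\ov z)^{q}]$ with the sign being ``$-$'' in parts (i), (iii) and ``$+$'' in part (ii).

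Two independent factors of $p$ now supply the divisibility by $p^2$. First, $e\equiv 0\pmod p$: the preceding lemma gives $T(z)\equiv 0\pmod p$ in each case, and $(1+z)^{q}$ is a unit modulo $p$---its norm equals $8$, $-12$, or $36$ in parts (i), (ii), (iii) respectively, all coprime to $p$. Second, the factor $(1+z)^{q}\pm(1+\ov z)^{q}$ is itself divisible by $p$, by a short Legendre-symbol computation. For (i) it equals $2^{q}[(2+\sqrt 2)^{q}-(2-\sqrt 2)^{q}]$; since $p\equiv 7\pmod 8$ forces $\sqrt 2\in\F_p$ and $\jacob{2}{p}=1$, the two symbols $\jacob{2\pm\sqrt 2}{p}$ have product $\jacob{2}{p}=1$ hence agree, yielding $(2+\sqrt 2)^{q}\equiv(2-\sqrt 2)^{q}\pmod p$. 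For (ii) it equals $(-2)^{q}[(3+2\sqrt 3)^{q}+(3-2\sqrt 3)^{q}]$; the assumption $p\equiv 11\pmod{12}$ gives $\sqrt 3\in\F_p$ by quadratic reciprocity, and $(3+2\sqrt 3)(3-2\sqrt 3)=-3$ with $\jacob{-3}{p}=-1$ forces the two symbols $\jacob{3\pm 2\sqrt 3}{p}$ to have opposite signs, so the two $q$-th powers cancel modulo $p$. For (iii) it equals $6^{q}[(3+2\sqrt 2)^{q}-(3-2\sqrt 2)^{q}]$, and $(3+2\sqrt 2)(3-2\sqrt 2)=1$ forces both Legendre symbols to agree, as in (i). Multiplying the two $p$-divisible factors produces the desired congruence modulo $p^2$. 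The main step to locate is the identification of the three algebraic pairs as exactly those at which $4z/(1+z)^2$ is rational; once this is done, both the divisibility of $e$ (from the preceding lemma) and the divisibility of the Galois sum/difference (from the Legendre-symbol argument) fall out without any further hypergeometric work.
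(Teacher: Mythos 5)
Your proposal is correct, but it proves the lemma by a different factorization than the paper does. The paper's own proof of this lemma uses no hypergeometric input at all at this stage: writing $q=\frac{p-1}2$, $\alpha_0=1+\sqrt2$, $\beta_0=1-\sqrt2$ (and similarly in (ii), (iii)), it exploits $\binom qk=\binom q{q-k}$ together with $\alpha_0\beta_0=\pm1$ to get the exact identity $\sum_k\binom qk^2\beta_0^{2k}=\beta_0^{p-1}\sum_k\binom qk^2\alpha_0^{2k}$, so the target sum factors as $(1-\beta_0^{p-1})\cdot\sum_k\binom qk^2\alpha_0^{2k}$ (up to the unit $\alpha_0-\beta_0$, with a sign twist in (ii)); one factor is $\equiv0\pmod p$ by the Fermat-type computation $2^{q}\equiv1$ (resp.\ $3^{q}\equiv1$), the other by the preceding lemma. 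You instead re-apply the quadratic transformation (\ref{qthf2}) with $a=b=-q$ as a polynomial identity, observe that $4z/(1+z)^2$ takes the same rational value ($\tfrac12$, $-\tfrac13$, $\tfrac19$) at each conjugate pair, and factor the sum as $e\cdot\bigl((1+z)^q\mp(1+\bar z)^q\bigr)$; you then get $e\equiv0\pmod p$ from the preceding lemma via the unit/norm argument, and the Galois combination $\equiv0\pmod p$ by Euler's criterion and a Legendre-symbol product computation. Both routes rest on the same two inputs (the preceding lemma plus an Euler/Fermat congruence), and your identities check out, including $W_{2k}/2^k=P_{4k}/3$, the values of $4z/(1+z)^2$, the norms $8,-12,36$, and the symbol products $\jacob{2}{p}=1$, $\jacob{-3}{p}=-1$, $\jacob{1}{p}=1$ in the respective cases. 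What the paper's argument buys is economy: for this lemma it avoids any further hypergeometric work, any question of $p$-integrality of the rational value $e$, and any splitting/valuation considerations. What your argument buys is uniformity and some conceptual transparency: a single transformation handles all three parts, and it makes explicit that the three algebraic pairs are precisely the fibres of $z\mapsto 4z/(1+z)^2$ over the rational points $\tfrac12,-\tfrac13,\tfrac19$, which is also the hidden reason behind the choices made in the preceding lemma.
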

\begin{proof} (i)
Let $\alpha=1+\sqrt{2}$ and $\beta=1-\sqrt{2}$. We know that
$$
P_k=\frac{\alpha^k-\beta^k}{\alpha-\beta}.
$$
Clearly
$$
\sum_{k=0}^{\frac{p-1}2}\binom{\frac{p-1}2}{k}^2\beta^{2k}=
\sum_{k=0}^{\frac{p-1}2}\binom{\frac{p-1}2}{k}^2\beta^{p-1-2k}=
\beta^{p-1}\sum_{k=0}^{\frac{p-1}2}\binom{\frac{p-1}2}{k}^2\alpha^{2k}.
$$
Hence
$$
\sum_{k=0}^{\frac{p-1}2}\binom{\frac{p-1}2}{k}^2(\alpha^{2k}-\beta^{2k})=
(1-\beta^{p-1})\sum_{k=0}^{\frac{p-1}2}\binom{\frac{p-1}2}{k}^2\alpha^{2k}.
$$
If $p\equiv7\pmod{8}$, then
$$
2^{\frac{p-1}{2}}\equiv1\pmod{p}.
$$
It follows that
\begin{equation}\label{betap1}
\beta^{p-1}=\frac{(1-\sqrt{2})^p}{1-\sqrt{2}}\equiv
\frac{1-2^{\frac{p-1}2}\cdot\sqrt{2}}{1-\sqrt{2}}\equiv\frac{1-\sqrt{2}}{1-\sqrt{2}}\equiv1\pmod{p}.
\end{equation}
Similarly, we also have $\alpha^{p-1}\equiv1\pmod{p}$.
In view of (\ref{alphacong1}) and (\ref{betap1}), when $p\equiv7\pmod{8}$, we can get
\begin{align*}
\sum_{k=0}^{\frac{p-1}2}\binom{\frac{p-1}2}{k}^2P_{2k}=&
\frac1{\alpha-\beta}\sum_{k=0}^{\frac{p-1}2}\binom{\frac{p-1}2}{k}^2(\alpha^{2k}-\beta^{2k})\\
=&\frac{1-\beta^{p-1}}{\alpha-\beta}\sum_{k=0}^{\frac{p-1}2}\binom{\frac{p-1}2}{k}^2\alpha^{2k}
\equiv0\pmod{p^2}.
\end{align*}

(ii) Let $\alpha=2+\sqrt{3}$ and $\beta=2-\sqrt{3}$. Then
$$
R_k=\alpha^k+\beta^k.
$$
Suppose that $p\equiv11\pmod{12}$. By the quadratic reciprocity theorem,
$$
\jacob{3}p=(-1)^{\frac{p-1}{2}\cdot\frac{3-1}{2}}\jacob{p}{3}=(-1)\cdot(-1)=1.
$$
So $3^{\frac{p-1}2}\equiv1\pmod{p}$.
Similarly as (\ref{betap1}), we can get
$$
\alpha^{p-1}\equiv\beta^{p-1}\equiv1\pmod{p}.
$$
Furthermore,
\begin{align*}
\sum_{k=0}^{\frac{p-1}2}\binom{\frac{p-1}2}{k}^2(-1)^k\beta^{2k}=&
\sum_{k=0}^{\frac{p-1}2}\binom{\frac{p-1}2}{k}^2(-1)^{\frac{p-1}{2}-k}\beta^{p-1-2k}\\
=&-\beta^{p-1}\sum_{k=0}^{\frac{p-1}2}\binom{\frac{p-1}2}{k}^2(-1)^k\alpha^{2k}.
\end{align*}
It follows from (\ref{alphacong2}) that
\begin{align*}
\sum_{k=0}^{\frac{p-1}2}\binom{\frac{p-1}2}{k}^2(-1)^k(\alpha^{2k}+\beta^{2k})=&
(1-\beta^{p-1})\sum_{k=0}^{\frac{p-1}2}\binom{\frac{p-1}2}{k}^2(-1)^k\alpha^{2k}\\
\equiv&0\pmod{p^2}.
\end{align*}

(iii) Let $\alpha=3+2\sqrt{2}$ and $\beta=3-2\sqrt{2}$. It is easy to verify that
\begin{equation}\label{walphabeta}
W_{2k}=2^{k+2}\cdot\frac{\alpha^{2k}-\beta^{2k}}{\alpha^2-\beta^2}
\end{equation}
for each $k\geq 0$.
If $p\equiv 7\pmod{8}$, we have $\alpha^{p-1}\equiv\beta^{p-1}\equiv 1\pmod{p}$, too.
Similarly, we can get
\begin{align*}
\sum_{k=0}^{\frac{p-1}2}\binom{\frac{p-1}2}{k}^2\frac{W_{2k}}{2^k}=&\frac{4}{\alpha^2-\beta^2}\sum_{k=0}^{\frac{p-1}2}\binom{\frac{p-1}2}{k}^2(\alpha^{2k}-\beta^{2k})\\
=&\frac{1-\beta^{p-1}}{6\sqrt{2}}\sum_{k=0}^{\frac{p-1}2}\binom{\frac{p-1}2}{k}^2\alpha^{2k}\equiv0\pmod{p^2}.
\end{align*}

\end{proof}

\section{Proofs of Theorems \ref{thm1} and \ref{thm1n}}
\setcounter{lemma}{0}
\setcounter{theorem}{0}
\setcounter{corollary}{0}
\setcounter{remark}{0}
\setcounter{equation}{0}
\setcounter{conjecture}{0}

We firstly consider (\ref{cong2}). It is easy to check that $V_n(-4,4)=-(-2)^{n+1}$ and $V_n(-2,1)=2(-1)^{n}$. Substituting $a=-4$ and $b=4$ in Theorem \ref{thm2}, we get
\begin{align*}
&\sum_{k=0}^{\frac{p-1}{2}}\binom{p-1}{k}\binom{2k}{k}^2\frac{1}{(-8)^k}\\
\equiv&(-1)^{\frac{p-1}{2}}16^{p-1}\sum_{j=0}^{\frac{p-1}{2}}(-1)^j\binom{\frac{p-1}{2}}{j}^2(1+2pH_{2j})
\pmod{p^2}.
\end{align*}
Suppose that $p\equiv 3\pmod{4}$. By (\ref{ank2}),
$$
\sum_{j=0}^{\frac{p-1}{2}}(-1)^j\binom{\frac{p-1}{2}}{j}^2=0.
$$
Note that for any $1\leq j\leq p-2$,
\begin{equation}\label{hp1j}
H_{p-1-j}=\sum_{i=j+1}^{p-1}\frac1{p-i}\equiv -(H_{p-1}-H_j)\equiv H_j\pmod{p}.
\end{equation}
We have
\begin{align*}
\sum_{j=0}^{\frac{p-1}{2}}(-1)^j\binom{\frac{p-1}{2}}{j}^2H_{2j}=
&(-1)^{\frac{p-1}2}\sum_{j=0}^{\frac{p-1}{2}}(-1)^{j}\binom{\frac{p-1}{2}}{j}^2H_{p-1-2j}\\
\equiv&
-\sum_{j=0}^{\frac{p-1}{2}}(-1)^{j}\binom{\frac{p-1}{2}}{j}^2H_{2j}\pmod{p},
\end{align*}
which clearly implies
$$
\sum_{j=0}^{\frac{p-1}{2}}(-1)^j\binom{\frac{p-1}{2}}{j}^2H_{2j}\equiv 0\pmod{p}.
$$
Therefore
$$
\sum_{k=0}^{\frac{p-1}{2}}\binom{p-1}{k}\binom{2k}{k}^2\frac{1}{(-8)^k}\equiv0\pmod{p^2}$$
for any prime $p\equiv3\pmod{4}$.

Let us consider (\ref{cong3}). Evidently $U_n(-1,1)=\chi_3(n)$ and $U_n(1,1)=(-1)^{n-1}\chi_3(n)$.
Applying Theorem \ref{thm2} with $a=-1$ and $b=1$, we obtain that
\begin{align*}
&\sum_{k=0}^{\frac{p-1}{2}}\binom{p-1}{k}\binom{2k}{k}^2\frac{\chi_3(k)}{16^k}\\
\equiv&(-1)^{\frac{p-1}{2}}16^{p-1}\sum_{j=0}^{\frac{p-1}{2}}(-1)^{j-1}\chi_3(j)\binom{\frac{p-1}{2}}{j}^2(1+2pH_{2j})
\pmod{p^2}.
\end{align*}
Suppose that the prime $p\equiv 1\pmod{12}$. Then
$$
(-1)^{\frac{p-1}{2}}\chi_3\bigg(\frac{p-1}{2}-j\bigg)=
\chi_3(-j)=-\chi_3(j).
$$
It follows that
$$
\sum_{j=0}^{\frac{p-1}{2}}(-1)^j\chi_3(j)\binom{\frac{p-1}{2}}{j}^2=
-\sum_{j=0}^{\frac{p-1}{2}}(-1)^j\chi_3(j)\binom{\frac{p-1}{2}}{j}^2,
$$
i.e.,
$$
\sum_{j=0}^{\frac{p-1}{2}}(-1)^j\chi_3(j)\binom{\frac{p-1}{2}}{j}^2=0.
$$
Similarly, we also have
$$
\sum_{j=0}^{\frac{p-1}{2}}(-1)^j\chi_3(j)\binom{\frac{p-1}{2}}{j}^2H_{2j}\equiv 0\pmod{p}.
$$
Thus (\ref{cong3}) is also proved.

The proofs of (\ref{congm}) and (\ref{congd3}) are very similar as the one of (\ref{cong3}).
Clearly $(-1)^{k-1}M_k=U_k(-3,3)$. Suppose that $p\equiv1\pmod{6}$. By Theorem \ref{thm2}, we only need to show that
\begin{align*}
\sum_{j=0}^{\frac{p-1}{2}}\binom{\frac{p-1}{2}}{j}^2U_k(-1,1)(1+2pH_{2j})\equiv0\pmod{p^2}.
\end{align*}
Since $U_k(-1,1)=\chi_3(k)$ and $\chi_3(\frac{p-1}2-k)=-\chi_3(k)$ now, we can get
$$
\sum_{j=0}^{\frac{p-1}{2}}\chi_3(j)\binom{\frac{p-1}{2}}{j}^2=0
\qquad\text{and}\qquad
\sum_{j=0}^{\frac{p-1}{2}}\chi_3(j)\binom{\frac{p-1}{2}}{j}^2H_{2j}\equiv0\pmod{p}.
$$
Then (\ref{congm}) is derived. Let
$$
\delta_3(k)=\begin{cases}2,&\text{if }k\equiv 0\pmod{3},\\
-1,&\text{otherwise}.
\end{cases}
$$
Obviously (\ref{congd3}) is equivalent to
$$
\sum_{j=0}^{\frac{p-1}{2}}\binom{p-1}{j}\binom{2j}{j}^2\frac{\delta_3(j)}{16^j}\equiv0\pmod{p^2}
$$
for each prime $p\equiv7\pmod{12}$.
Since $\delta_3(k)=V_k(-1,1)$, it suffices to show that
$$
\sum_{j=0}^{\frac{p-1}{2}}\binom{\frac{p-1}{2}}{j}^2V_j(1,1)(1+2pH_{2j})\equiv0\pmod{p^2}.
$$
It is easy to verify $$V_k(1,1)=-V_{6h+3-k}(1,1)$$ for any $h\geq 0$ and $0\leq k\leq 6h+3$.
So if $\frac{p-1}{2}\equiv 3\pmod{6}$,
$$
\sum_{j=0}^{\frac{p-1}{2}}\binom{\frac{p-1}{2}}{j}^2V_j(1,1)=0
\quad\text{and}\quad
\sum_{j=0}^{\frac{p-1}{2}}\binom{\frac{p-1}{2}}{j}^2V_j(1,1)H_{2j}\equiv0\pmod{p}.$$

Finally, let us turn to (\ref{congp}), (\ref{congr}) and (\ref{congw}). We require some additional auxiliary results.
\begin{lemma} (i) Suppose that $p\equiv\pm1\pmod{8}$ is a prime.
Then
\begin{equation}\label{ph2k0}
\sum_{k=0}^{\frac{p-1}2}\binom{\frac{p-1}2}{k}^2P_{2k}H_{2k}\equiv0\pmod{p}.
\end{equation}

\noindent(ii) Suppose that $p\equiv11\pmod{12}$ is a prime.
Then
\begin{equation}\label{rh2k0}
\sum_{k=0}^{\frac{p-1}2}\binom{\frac{p-1}2}{k}^2(-1)^kR_{2k}H_{2k}\equiv0\pmod{p}.
\end{equation}

\noindent(iii) Suppose that $p\equiv7\pmod{8}$ is a prime.
Then
\begin{equation}\label{wh2k0}
\sum_{k=0}^{\frac{p-1}2}\binom{\frac{p-1}2}{k}^2\frac{W_{2k}}{2^k}\cdot H_{2k}\equiv0\pmod{p}.
\end{equation}
\end{lemma}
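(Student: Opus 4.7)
My plan is to mimic the symmetry argument used in the preceding lemma, enhanced with the harmonic-number reflection $H_{p-1-j} \equiv H_j \pmod p$ recorded in~(\ref{hp1j}). In each of the three cases I would set $\al,\be$ exactly as before: $\al,\be = 1\pm\sqrt 2$ in (i), $2\pm\sqrt 3$ in (ii), $3\pm 2\sqrt 2$ in (iii). In all three cases $(\al\be)^2=1$, which gives $\be^{p-1-2k} = \be^{p-1}\al^{2k}$. Since $\binom{(p-1)/2}{k}^2$ is invariant under $k \mapsto (p-1)/2-k$, reindexing and applying $H_{p-1-2k}\equiv H_{2k}\pmod p$ (the corner cases $k=0$ and $k=(p-1)/2$ reduce to $H_0=0$ and $H_{p-1}\equiv 0$, consistent with the identity) yields the master relation
$$\sum_{k=0}^{\frac{p-1}2}\binom{\frac{p-1}2}{k}^2\be^{2k}H_{2k} \equiv \be^{p-1}\sum_{k=0}^{\frac{p-1}2}\binom{\frac{p-1}2}{k}^2\al^{2k}H_{2k}\pmod p.$$

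For (i), I would write $(\al-\be)P_{2k} = \al^{2k}-\be^{2k}$ and subtract; the master relation converts the target sum into a scalar multiple of $(1-\be^{p-1})\sum\binom{(p-1)/2}{k}^2\al^{2k}H_{2k}$. The hypothesis $p\equiv\pm 1\pmod 8$ forces $2^{(p-1)/2}\equiv 1\pmod p$, which in turn gives $\be^{p-1}\equiv 1\pmod p$ via the computation already performed in~(\ref{betap1}), whence (\ref{ph2k0}) follows. The proof of (\ref{wh2k0}) is essentially identical, starting from the representation $W_{2k}/2^k = 4(\al^{2k}-\be^{2k})/(\al^2-\be^2)$ of~(\ref{walphabeta}); under $p\equiv 7\pmod 8$ one again has $\be^{p-1}\equiv 1\pmod p$.

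For (ii), the extra alternating factor $(-1)^k$ maps to $(-1)^{(p-1)/2-k} = -(-1)^k$ when $(p-1)/2$ is odd, which it is when $p\equiv 11\pmod{12}$. This sign flip is precisely what converts the $+$ in $R_{2k} = \al^{2k}+\be^{2k}$ into the desired combination producing a $(1-\be^{p-1})$-factor; the quadratic reciprocity argument from the preceding lemma gives $\jacob{3}{p}=1$, hence $\be^{p-1}\equiv 1\pmod p$, establishing (\ref{rh2k0}).

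The main point requiring care is the bookkeeping of signs in (ii) and the verification that the boundary values $k=0$ and $k=(p-1)/2$ are consistent with the $H$-symmetry (both give $0\pmod p$, so there is no genuine obstruction). No new machinery is needed: the quadratic-irrational identities, the Fermat-type congruences $\al^{p-1}\equiv\be^{p-1}\equiv 1\pmod p$ under the stated residue conditions, and the quadratic residue facts have all been established in the proof of the preceding lemma and can be imported verbatim; the only genuinely new ingredient is the harmonic reflection~(\ref{hp1j}), which slots into the same symmetry $k\mapsto (p-1)/2-k$ without changing any of the other steps.
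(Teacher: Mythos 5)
Your proposal is correct and takes essentially the same route as the paper's own proof: both rely on the reindexing $k\mapsto\frac{p-1}{2}-k$ together with $(\alpha\beta)^2=1$, the reflection $H_{p-1-j}\equiv H_j\pmod{p}$ from (\ref{hp1j}), and the congruences $\alpha^{p-1}\equiv\beta^{p-1}\equiv1\pmod{p}$ imported from the preceding lemma, with the sign bookkeeping for the $(-1)^k$ factor in case (ii) handled exactly as in the paper. The only cosmetic difference is that the paper concludes by showing each sum is congruent to its own negative, while you isolate the factor $1-\beta^{p-1}$; these are interchangeable.
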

\begin{proof} (i)
Let $\alpha=1+\sqrt{2}$ and $\beta=1-\sqrt{2}$.
Clearly
\begin{align*}
\sum_{k=0}^{\frac{p-1}2}\binom{\frac{p-1}2}{k}^2(\alpha^{2k}-\beta^{2k})H_{2k}
=&\sum_{k=0}^{\frac{p-1}2}\binom{\frac{p-1}2}{k}^2(\alpha^{p-1-2k}-\beta^{p-1-2k})H_{p-1-2k}\\
=&\sum_{k=0}^{\frac{p-1}2}\binom{\frac{p-1}2}{k}^2(\alpha^{p-1}\beta^{2k}-\beta^{p-1}\alpha^{2k})H_{p-1-2k}.
\end{align*}
Recall that $\alpha^{p-1}\equiv\beta^{p-1}\equiv1\pmod{p}$ now.
In view of (\ref{hp1j}), we get
$$
\sum_{k=0}^{\frac{p-1}2}\binom{\frac{p-1}2}{k}^2(\alpha^{2k}-\beta^{2k})H_{2k}
\equiv\sum_{k=0}^{\frac{p-1}2}\binom{\frac{p-1}2}{k}^2(\beta^{2k}-\alpha^{2k})H_{2k}\pmod{p},
$$
i.e.,
$$
\sum_{k=0}^{\frac{p-1}2}\binom{\frac{p-1}2}{k}^2(\alpha^{2k}-\beta^{2k})H_{2k}
\equiv0\pmod{p}.
$$

\noindent(ii) Let $\alpha=2+\sqrt{3}$ and $\beta=2-\sqrt{3}$. Similarly, we have
\begin{align*}
&\sum_{k=0}^{\frac{p-1}2}\binom{\frac{p-1}2}{k}^2(-1)^k(\alpha^{2k}+\beta^{2k})H_{2k}\\
=&\sum_{k=0}^{\frac{p-1}2}\binom{\frac{p-1}2}{k}^2(-1)^{\frac{p-1}{2}-k}(\alpha^{p-1}\beta^{2k}+\beta^{p-1}\alpha^{2k})H_{p-1-2k}\\
\equiv&
-\sum_{k=0}^{\frac{p-1}2}\binom{\frac{p-1}2}{k}^2(-1)^{k}(\beta^{2k}+\alpha^{2k})H_{2k}\pmod{p},
\end{align*}
which clearly implies (\ref{rh2k0}).

\noindent(iii) Let $\alpha=3+2\sqrt{3}$ and $\beta=3-2\sqrt{3}$. Since $p\equiv\pm1\pmod{8}$,
we get
$$
\sum_{k=0}^{\frac{p-1}2}\binom{\frac{p-1}2}{k}^2(\alpha^{2k}-\beta^{2k})H_{2k}
\equiv0\pmod{p}.
$$
Then (\ref{wh2k0}) is immediately derived from (\ref{walphabeta}).
\end{proof}
Now we are ready to prove (\ref{congp}), (\ref{congr}) and (\ref{congw}).
It is not difficult to see that $2^nP_n=U_n(4,-4)$ and $P_{2n}=2U_n(6,1)$. So by (\ref{congun}), (\ref{p2kh78}) and (\ref{ph2k0}),
\begin{align*}
&\sum_{k=0}^{\frac{p-1}{2}}\binom{p-1}{k}\binom{2k}{k}^2\frac{P_k}{8^k}\\
\equiv&(-1)^{\frac{p-1}{2}}16^{p-1}\bigg(\sum_{j=0}^{\frac{p-1}{2}}\binom{\frac{p-1}{2}}{j}^2P_{2j}+2p\sum_{j=0}^{\frac{p-1}{2}}\binom{\frac{p-1}{2}}{j}^2P_{2j}H_{2j}\bigg)\equiv0
\pmod{p^2}.
\end{align*}
(\ref{congr}) similarly follows from (\ref{r2kh1112}) and (\ref{rh2k0}), since $(-4)^nR_n=V_n(-16,16)$ and $(-1)^nR_{2n}=V_n(-14,1)$. Easily we can verify $4^{n-1}W_n=U_n(32,-32)$ and $2^{-n-2}W_{2n}=U_n(34,1)$. So (\ref{congw}) is also an easy consequence of (\ref{w2kh78}) and (\ref{wh2k0}).

\begin{acknowledgment}
We are grateful to the anonymous referee for his/her very useful suggestions. We also thank Professor Zhi-Wei Sun for his helpful comments on this paper.
\end{acknowledgment}

\end{document}